\def\eat#1{}
\renewcommand{\paragraph}[1]{\par {\it #1}} 
\newtheorem{theorem}{Theorem}[section]
\newtheorem{proposition}{Proposition}[section]
\newtheorem{lemma}{Lemma}[section]
\newtheorem{example}[theorem]{Example}
\newtheorem{remark}{Remark}[section]
\newtheorem{corollary}{Corollary}[section]
\numberwithin{equation}{section}
\def\RR{{\mathbb R}}
\def\II{{(D)}}
\def\dH{\dot{H}}
\def\cC{{\mathcal{C}}}
\chardef\atsign='100
\def\cE{\mathcal E}
\def\cL{\mathcal L}
\def\cut#1\endcut{}
\def\hdiv(#1){H_{\hbox{div}}(#1)}
\def\cL{\mathcal{L}}
\def\DD{\mathcal{D}}
\def\RR{\mathbb {R}}
\def\CC{\mathbb {C}}
\def\Forall{\hbox{\ \  for all }}
\def\beq#1\eeq{\begin{equation}#1\end{equation}}
\def\bal#1\eal{\begin{aligned}#1\end{aligned}}
\def\baln#1\ealn{\begin{align*}#1\end{align*}}
\renewcommand{\Re}{\mathfrak{Re}}
\renewcommand{\Im}{\mathfrak{Im}}
\def\DD#1#2{\icount=#1
  \ifnum\icount<1
  \,_{ 0}\kern -.1em D^{#2}_{\kern -.1em x}
  \else
  \,_{x}\kern -.2em D^{#2}_1
  \fi
}
\def\DDRI#1#2{\icount=#1
  \ifnum\icount<1
  \,_{-\infty}^{\kern 1em R}\kern -.2em D^{#2}_{\kern -.1em x}
  \else
  \,_{x}^R \kern -.2em D^{#2}_\infty
  \fi
}
\def\DDR#1#2{\icount=#1
  \ifnum\icount<1
 _{0}^{ \kern -.1em R} \kern -.2em D^{#2}_{\kern -.1em x}
  \else
Translate
EnglishSpanishFrenchDetect languageChinese (Simplified)EnglishSpanish
 _{x}^{ \kern -.1em R} \kern -.2em D^{#2}_{\kern -.1em 1}
  \fi
}
\def\DDCI#1#2{\icount=#1
  \ifnum\icount<1
  \,_{-\infty}^{\kern 1em C}  \kern -.2em D^{#2}_{\kern -.1em x}
  \else
  \,_{x}^C \kern -.2em  D^{#2}_\infty
  \fi
}
\def\DDC#1#2{\icount=#1
  \ifnum\icount<1
  \,_{0}^C \kern -.2em  D^{#2}_{\kern -.1em x}
  \else
  \,_{x}^C \kern -.2em D^{#2}_1
  \fi
}
\def\Hdi#1#2{\icount=#1
  \ifnum\icount<1
  \widetilde H_{L}^{#2}\II
  \else
  \widetilde H_{R}^{#2}\II
  \fi
}
\begin{document}
\title[
PARABOLIC EQUATIONS INVOLVING FRACTIONAL POWERS]
{THE APPROXIMATION OF PARABOLIC EQUATIONS INVOLVING FRACTIONAL POWERS OF ELLIPTIC OPERATORS}

\author{Andrea Bonito}
\address{Department of Mathematics, Texas A\&M University, College Station,
TX~77843-3368.}
\email{bonito\atsign math.tamu.edu}

\author{Wenyu Lei}
\address{Department of Mathematics, Texas A\&M University, College Station,
TX~77843-3368.}
\email{wenyu\atsign math.tamu.edu}

\author{Joseph E. Pasciak}
\address{Department of Mathematics, Texas A\&M University, College Station,
TX~77843-3368.}
\email{pasciak\atsign math.tamu.edu}

\date{\today}
\begin{abstract}
We study the numerical approximation of a time dependent equation involving 
fractional powers of an elliptic operator $L$ defined to be the 
unbounded 
operator associated with a Hermitian, coercive and 
bounded sesquilinear form on $H^1_0(\Omega)$. 
The time dependent solution $u(x,t)$ is represented as  a Dunford Taylor 
integral along a contour in the complex plane.

The contour integrals are approximated using 
sinc quadratures.
In the case of homogeneous right-hand-sides and initial value $v$, the
approximation results in a linear combination of functions
$(z_qI-L)^{-1}v\in H^1_0(\Omega)$ for a
finite number of quadrature points $z_q$ lying along the contour.
In turn, these quantities are approximated using complex valued continuous piecewise linear finite elements.

Our main result provides $L^2(\Omega)$ error estimates between the solution $u(\cdot,t)$ and its final approximation.
Numerical results illustrating the behavior of the algorithms are provided.
\end{abstract}

\maketitle

\section{Introduction}
We consider the approximation of parabolic equations
where the elliptic part is given by a fractional power of an elliptic
boundary value operator.
  This is a prototype for 
time dependent equations with integral
or nonlocal
operators and has numerous applications \cite{bigQG,Constantin_Wu,cushman1993nonlocal,bakunin2008turbulence,bates2006some}.
For example, a  non-local operator results from replacing Brownian diffusion
with Levy-diffusion. 
\textcolor{black}{When the spatial domain is bounded, the fractional powers $L^\beta$  can
be defined in terms of  Fourier series, namely,
\beq
L^\beta v =\sum_{j=1}^\infty (v,\psi_j) \lambda_j^ \beta \psi_j.
\label{ldef}
\eeq
Here $(\cdot,\cdot)$ denotes the Hermitian $L^2(\Omega)$ inner product and 
$\{\psi_j\}$ is an $L^2(\Omega)$-orthonormal 
basis of eigenfunctions of $L$ with eigenvalues $\{\lambda_j\}$.
An alternate, yet equivalent, formulation can be found in \cite{kato1961,lunardi} for general regularly accretive operators $L$, see also \cite{BP2015}.}


In this paper, we focus on a bounded domain problem where $\RR^d$ is replaced by
a bounded domain $\Omega \subset \RR^d$ and for $T>0$, the targeted
function $u: \Omega \times [0,T] \rightarrow \mathbb R$ satisfies
\begin{equation}\label{eq:problem}
  \left\lbrace
  \begin{aligned}
      u_t+L^\beta u=f, & \qquad\text{in } \Omega\times (0,T),\\
      u=0, & \qquad\text{on }\partial   \Omega\times(0,T),\\
      u(t=0)=v, & \qquad\text{on }\Omega. \\
  \end{aligned}
\right.
\end{equation}
Here $v\in L^2(\Omega)$, $f \in L^2(0,T;L^2(\Omega))$, $\beta\in (0,1)$.
The following discussion focus on the case $f=0$ as the case $f \not = 0$ follows from it using the Duhamel principle (see e.g. Corollary~\ref{cor:nonho_semi}).
\textcolor{black}{
We point out that although we restrict our considerations to homogeneous Dirichlet boundary conditions,  other types of homogeneous boundary conditions can be treated similarly. We refer to \cite{frac14} for a general framework.
}

At the continuous level, \eqref{eq:problem} fits into the standard
theory for parabolic initial value problems.  The weak form $(L^\beta u,v)$ is a bounded coercive operator
on $D(L^{\beta/2})$ resulting in existence and uniqueness in the natural
spaces (see, Section~\ref{preli} for details). In contrast, the
situation is not standard at the discrete  (finite element) level.   
This is because stiffness matrix entries corresponding to the 
Galerkin method, $\{(L^\beta \phi_i,\phi_j)\}$, with $\{\phi_i\}$
denoting the finite element basis, cannot be evaluated exactly 
so that the classical analysis \cite{thomee} does not apply.  
Instead, we use a discrete approximation to the sesquilinear form
namely, $\{(L_h^\beta u,v)\}$ with $L_h$ being the finite element
approximation to $L$.

Several numerical methods to approximate the solution of
\eqref{eq:problem} with $f=0$ have been studied. 
One is based on 
the spectral decomposition of a (symmetric) finite difference approximation 
$L_h$ to $L$ 
(see \cite{ILIC1,ILIC2}). 
It follows from \eqref{ldef} that the solution to 
\eqref{eq:problem} is given by  
\begin{equation}\label{eq:spectral}
  u(x,t)=e^{-tL^\beta} v=\sum_{j=1}^\infty e^{-t\lambda_j^\beta}(v,\psi_j) \psi_j(x). 
\end{equation}
and \cite{ILIC1,ILIC2} propose the finite difference approximation
given by 
\beq
U_h=\textcolor{black}{e^{-tL_h^\beta}} V_h\label{ilic}
\eeq
with $V_h$ denoting the interpolant of $v$.  The right hand side of
\eqref{ilic} is computed from the spectral decomposition of $L_h$, where $L_h$ is the discrete Laplacian 
generated by the finite difference scheme. 
Of course, the direct implementation of
this method requires the computation of the discrete eigenvectors and
their eigenvalues. This is a demanding computational problem when the
dimension of the discrete problem becomes large.

A second  approach \cite{Nochetto_time} is to consider the fractional
power of $L$ as a ``Dirichlet to Neumann'' map 
via the ``so-called'' Caffarelli-Silvestre extension problem (see
\cite{Caffarelli_Silvestre, Stinga_Torrea}) 
on the semi-infinite cylinder $\Omega\times(0,\infty)$. The trace of
solution of the local extension problem
onto $\Omega$ is the solution of the  original nonlocal problem. Numerically, 
the extension problem can be approximated using finite element method 
in a bounded domain by truncating in the extra dimension to $(0,\mathcal Y)$
for some $\mathcal Y>1$. The truncation error in $\mathcal Y$ 
becomes exponentially
small as $\mathcal Y$ increases (see \cite{Nochetto_time}).  

The goal of this paper is twofold.  First, we study the convergence   
of \eqref{ilic} when the 
numerical approximation $L_h$ of $L$ is defined from the
Galerkin finite element method applied 
in a finite element  approximation space $H_h$.  
In this case, the eigenfunctions
$\{\psi_{j,h}\} $ of $L_h$ can be taken to be  $L^2(\Omega)$ orthonormal
functions in $H_h$ leading to the  approximation 
\begin{equation}\label{eq:spectral_finite}
  u_h(t)=\sum_{j=1}^M e^{-t\lambda_{j,h}^\beta}(v,\psi_{j,h}) \psi_{j,h}
\end{equation}
with $\lambda_{j,h}$ denoting the corresponding eigenvalues.

Motivated by \cite{frac14} on the steady state problem, we prove  (see, Theorem~\ref{thm:semi_init})
that  
 for some $\alpha\in(0,1]$ depending on $\Omega$ and $L$, there exists a constant $C(t)$ uniform in $h$, such that
    \begin{equation}\label{ineq:init_value}
   \|u(\cdot,t)-u_h(t)\| \le C(t)h^{2\alpha}.
    \end{equation}
The constant $C(t)$ depends on $t$ and $\alpha$, the regularity
of $v$. 

Based on the techniques presented in \cite{Hackbush_exponential,Hackbush_data_sparse,sinc_int,
Thomee_laplace,Thomee_parallel}, we next provide and study a
 sinc type quadrature method \cite{lund1992sinc}
 for approximating $u_h$ avoiding the eigenfunction expansion in \eqref{eq:spectral_finite}.   
We note that  $u_h(t)=e^{-tL_h^\beta} \pi_h v$ with
$\pi_h $ denoting the $L^2(\Omega)$ projection onto $H_h$.
Both $u$ and $u_h$ can be written using the Dunford-Taylor integral,
e.g.,  
\begin{equation}\label{eq:Dunford_int}
  u_h(t)=\frac{1}{2\pi i}\int_{\cC} e^{-tz^\beta}R_z(L_h)\pi_h v\ dz
\end{equation}
 (see, Section~\ref{preli})
where $R_z(L_h):=(zI-L_h)^{-1}$ is the resolvent.  
Following
\cite{Hackbush_data_sparse}, for $b\in (0,\lambda_1/\sqrt{2})$, 
we take $\cC$ given by the path
\beq
\gamma(y)=b(\cosh{ y}+i\sinh{y}), \qquad y\in \RR.
\label{hyper-curve}
\eeq
Then \eqref{eq:Dunford_int} becomes
\beq
u_h(t) = \frac{1}{2\pi i} \int_{-\infty}^\infty e^{-t \gamma(y)^\beta} \gamma'(y)[(\gamma(y)I-L_h)^{-1}\pi_h v] \, dy.
\label{realint-u_h}
\eeq
We apply the sinc method to approximate the vector valued integral in
\eqref{realint-u_h}.

We show that for fixed $t$, the  $L^2(\Omega)$ error 
between $u_h(t)$ and its numerical approximation with $2N+1$ quadrature
points is 
$O(e^{-N\log(N)})$ (Example~\eqref{Example_exp}).  Each quadrature point $y_\ell$ involves an evaluation of
$(\gamma(y_\ell)I-L_h)^{-1}\pi_h v$, i.e., the solution of a matrix problem
involving the stiffness matrix for the form 
$\gamma(y_\ell)( u,v)-A(u,v)$ and the usual
finite element right hand side vector for $v$.  Here $A(\cdot,\cdot)$
denotes the Hermitian form mentioned above (see also Section~\ref{preli}).
These problems
are independent and can be solved in parallel.
The total error is estimated by combining the space discretization error and the quadrature error (see Corrollary~\ref{c:total_error}).

\cut
Note that we anticipate a fully discrete scheme based on a backward Euler time discretization of \eqref{eq:problem} consist in finding recursively $u_h^{n+1} \in H_h$ such that
$$ 
u_h^{n+1}=\frac{1}{2\pi i}\int_{\cC} (1+\delta t z^\beta )^{-1}R_z(t^{1/\beta} L_h) (\delta t \pi_h f(t^{n+1})+u_h^{n})\ dz,
$$
where $\delta t$ is the time step discretization.
By letting the contour runs from $\infty e^{i\pi}$ to 0, and then from 0 to $\infty e^{-i\pi}$(see also \cite{kato1961}), the above expression becomes
$$
	u_h^{n+1}=\frac{\sin\pi\alpha}{\pi}\int_0^{\infty} 
	\frac{\mu^\beta}{1+2\mu^\beta\cos\pi\beta+\mu^{2\beta}}
	(\mu I+\delta t^{1/\beta}L_h)^{-1}(\delta t \pi_h f^{n+1}+u_h^n)\, d\mu
$$
and again a sinc quadrature scheme can also be applied to the above formula upon setting $\mu:=e^y$.
The results provided in this work will be instrumental to analyze the above full discretization.
\endcut

The outline of this paper is as follows. In Section \ref{preli}, we
provide some basic notations and preliminaries about fractional powers of unbounded operators. 
The finite element setting and the space
discretization scheme are developed in Section \ref{FEapprox}. The error
between $u(t)$ and $u_h(t)$ is also given there.
The quadrature scheme and its analysis are given in Section 
\ref{quad}.  Finally, some numerical results illustrating the convergence
behavior are given in Section \ref{numerical}.

\section{Preliminaries}\label{preli}

\paragraph{Notation.} Let $\Omega \subset \mathbb R^d$, $d = 2,3$, be a bounded polygonal domain with Lipschitz boundary.
We denote by $L^2(\Omega)$, $H^1(\Omega)$ and $H^1_0(\Omega)$ the standard Sobolev spaces of complex valued function with norms:
$$\bal
\| v\| &:= \|v\|_{L^2(\Omega)}:= \left( \int_\Omega |v|^2 \right)^{1/2},\\
 \|v\|_{H^1(\Omega)}&:= \left(\| v\|_{L^2(\Omega)}^2 + \| |\nabla v|
 \|_{L^2(\Omega)}^2\right)^{1/2} \ \text{and}\\
\|v\|_{H^1_0(\Omega)}&:= \| |\nabla v |\|_{L^2(\Omega)}.
\eal
$$

We use the notation
$$
A \leq c B \qquad \text{and} \qquad A \leq C B
$$
where $c$ and $C$ are generic constants independent of $A$ and $B$.

\paragraph{The Unbounded Operator $L$ and the Dotted Spaces.}
We recall that  $A(\cdot,\cdot)$ is assumed to be a Hermitian, coercive and 
bounded sesquilinear form on $H^1_0(\Omega)$.
This means that there are $c_0$ and $c_1$ two positive constants such that
$$\bal
A(v,v) &\geq c_0 \| v \|^2_{H^1_0(\Omega)},\Forall v\in
H^1_0(\Omega),\hbox{ and }\\
 |A(v,w)| &\leq c_1 \|
v\|_{H^1_0(\Omega)} \| w \|_{H^1_0(\Omega)},\Forall v,w\in H^1_0(\Omega).
\eal
$$
Let $T:L^2(\Omega)\rightarrow
H^1_0(\Omega)$ be the solution operator, i.e.,  $w:=Tf \in H^1_0(\Omega)$ is the unique solution (guaranteed by Lax-Milgram) of 
\beq
A(w,\theta)= (f,\theta),\Forall \theta\in H^1_0(\Omega),
\label{laxm}
\eeq
Following \cite{kato1961}, see also \cite{BP2015}, we define the
unbounded operator $L:=T^{-1}$ on $L^2(\Omega)$ with domain
$D(L):=\hbox{Range}(T)$.

We next define the dotted spaces for $s\ge 0$.
We note that since $T$ is compact and symmetric 
on $L^2(\Omega)$, Fredholm theory guarantees an $L^2(\Omega)$-orthogonal
basis of eigenfunctions $\{\psi_j\}_{j=1}^\infty$ with non-increasing 
real eigenvalues $\mu_1>\mu_2 \geq \mu_3 \geq ... >0$. 
For every positive integer $j$, $\psi_j$ is 
also an eigenfunction of $L$ with corresponding eigenvalue $\lambda_j=1/\mu_j$.
These eigenpairs are instrumental for defining the following dotted spaces.
For $s\geq 0$, the dotted space $\dH^s$ is given by 
$$
\dH^s:= \left\lbrace f\in L^2(\Omega)\ \text{s.t.} \  \sum_{j=1}^\infty \lambda_j^s |(f,\psi_j)|^2<\infty
\right\rbrace.
$$
These spaces form a Hilbert scale of interpolation spaces equipped with the norm
 $$
 \|v\|_{\dH^s} := \bigg(\sum_{j=1}^\infty \lambda_j^s |(v,\psi_j)|^2\bigg)^{1/2}.
 $$ 
Notice that 
$$\|v\|_{\dH^s}=(L^sv,v)^{1/2}=\|L^{s/2} v\|_{L^2(\Omega)},$$
i.e., $\dH^s = D(L^{s/2})$.
Moreover,  $\dH^1 $ coincides with $H^1_0(\Omega)$ and $\dH^0$
coincides with $L^2(\Omega)$, in both cases with equal norms \textcolor{black}{(see for e.g. Lemma 3.1 in \cite{thomee})}.

We denote  $\dH^{-s}$ to be the set of bounded anti-linear functionals
on 
$\dH^s$, i.e. if $\langle F,v\rangle$ denotes the action of $F \in \dH^{-s}$ applied to $v\in \dot{H}^s$ then
$$
\dH^{-s} = \left\{F:\dH^s \rightarrow \mathbb C \ \text{s.t} \ \|F\|_{\dH^{-s}}:= \bigg(\sum_{j=1}^\infty
\lambda_j^{-s}|\langle F,\psi_j\rangle |^{2}\bigg)^{1/2}<\infty\right \}.
$$
Since $H^1_0(\Omega)$ and $\dH^1$ coincide, so does $\dH^{-1}$ and $H^{-1}(\Omega)$,
the set of bounded anti-linear functionals on $H^1_0(\Omega)$.
\cut
By definition, the spaces are such that for $s\ge -1$, 
$T:\dH^s \rightarrow \dH^{s+2}$ 
and $L:\dH^{s+2}\rightarrow \dH^s $.
\endcut

\cut
THIS IS NOT TRUE!
It is well known that 
Notice that applying the Cauchy theorem, the expression for $L^s$ coincide with the Dunford-Taylor formula
\beq
L^\beta f = \frac{1}{2\pi i} \int_{\cC}  z^{\beta} R_z(L)fdz,
\label{ldef}
\eeq
where $R_z(L):= (zI - L)^{-1}$ and the integration contour $\cC$ is a
simple curve enclosing the spectrum of $L$ but excluding the negative
real axis and the origin (possible since $c_0 >0$). 
\endcut


\paragraph{Weak Formulation of the Initial Value Problem.} Given a final time $T>0$,
We consider the following weak formulation of \eqref{eq:problem}: find $u\in L^2(0,T; \dH^\beta)$ with $u_t \in L^2(0,T;\dH^{-\beta})$ such that
\begin{equation}\label{eq:weak_pro}
\left\lbrace
\begin{aligned}
  \textcolor{black}{\langle u_t(t),\phi\rangle}+A^\beta(u(t),\phi)&=0 \hbox{ for all } \phi\in\dot{H}^\beta \ \text{and for a.e. } t \in (0,T),\\
  u(0)&=v,  
  \end{aligned}
\right.
\end{equation}
where 
$$ A^\beta(v,w):= (L^\beta v,w) =\sum_{j=1}^\infty \lambda_j^\beta (v,\psi_j) \overline{
  (w,\psi_j)}.$$
From the above definition it follows that $A^\beta$ is a Hermitian sesquilinear form satisfying
$$A^\beta(w,w)=\|w\|_{\dH^\beta}^2.
$$
The standard analysis for time dependent parabolic equations, see for
example \cite{evans}, \textcolor{black}{implies the existence and uniqueness of a solution $u$
to \eqref{eq:weak_pro}.}
(it is the limit of the partial sums below)
and hence   
\begin{equation}\label{e:serie_sol}
u(t) = \sum_{j=1}^\infty  e^{-t\lambda_j^\beta}(v,\psi_j) \psi_j.
\end{equation}
In addition, Cauchy's theorem applied to the partial sums and the
Bochner integrability of the Dunford-Taylor integral below implies
that
\begin{equation}\label{e:dt}
u(t)=e^{-tL^\beta}v:=  \frac 1 {2\pi i} \int_\cC e^{-tz^\beta} R_z(L)v\, dz.
\end{equation}
Here $R_z:=(zI-L)^{-1}$, $z^\beta:= e^{\beta \ln(z)}$ with the logarithm defined with branch
cut along the negative real axis and $\cC$ is a Jordan curve separating
the spectrum of $L$ and the imaginary axis oriented to have the spectrum
of $L$ to its right (see, \eqref{hyper-curve} or \eqref{e:contour} below). 

\paragraph{Dotted Spaces and Sobolev Spaces.}
To understand the approximation properties of finite element methods,
we need to  characterize the spaces $\dH^s$ in terms of Sobolev spaces.
For any $-1\le s \le 2$, set
$$\widetilde{H}^s(\Omega):=\left\{\bal &H^1_0(\Omega)\cap H^s(\Omega),&  1\le s\le 2,\\
				  &[L^2(\Omega),H^1_0(\Omega)]_s,&  0 \le s \le 1,\\
				  &[H^{-1}(\Omega),L^2(\Omega)]_{1+s},& -1\le s \le 0,\eal \right.$$
where $[\cdot,\cdot]_s$ denotes interpolation using the real method. 
By Proposition 4.1 of \cite{frac14}, the spaces $\widetilde{H}^s(\Omega)$ and $\dot{H}^s$ coincide for $s\in[-1,1]$ and their norms are equivalent.

We can consider  $T$ as acting on
 $H^{-1}(\Omega)=\dH^{-1}$ by defining $w:=TF$ as the
solution to \eqref{laxm} with right hand
side replaced by $\langle F,\theta \rangle$. This is an extension of 
the previously defined $T$ upon identifying $L^2(\Omega)$ with its dual.
We then assume:
\begin{enumerate}[(a)]
 \item  There exists $\alpha\in(0,1]$ such that $T$ is a bounded map of $\widetilde{H}^{-1+\alpha}(\Omega)$ into $\widetilde{H}^{1+\alpha}(\Omega)$.
 \item  The functional $F$ defined by 
$$\langle F,\theta\rangle := A(u,\theta),\Forall \theta\in
H^1_0(\Omega)$$
is a bounded operator from $\widetilde{H}^{1+\alpha}(\Omega)$ to 
$\widetilde{H}^{-1+\alpha}(\Omega)$.
\end{enumerate}
Assumptions (a) and (b) implies (see Proposition 4.1 in \cite{frac14}) that  the spaces $\widetilde{H}^s(\Omega)$ and $\dot{H}^s$ coincide for $s\in[-1,1+\alpha]$.

\section{Finite Element Approximation}\label{FEapprox} 

\paragraph{Finite Element Spaces.}
We now consider finite element approximations to \eqref{eq:weak_pro}. 
Let $\{\mathcal{T}_h\}_{h>0}$ be a sequence of globally quasi uniform 
conforming subdivisions of $\Omega$ made of simplexes, i.e. there are positive
constants  $\rho$ and $c$ independent
of $h$ 
such that if $R_\tau$ denotes the diameter of $\tau$ and $r_\tau$
denotes the radius of the largest ball which can be inscribed in $\tau$,
then for all $h>0$,
\begin{equation}\label{ineq:quasi}\bal
R_\tau/r_\tau &\le c\Forall \tau\in \mathcal{T}_h,\quad \hbox{and,}\\
  \max_{\tau\in\mathcal{T}_h}R_\tau &\le \rho\min_{\tau\in\mathcal{T}_h}
  R_\tau.
\eal
\end{equation}
For $h>0$, we
denote $H_h\subset H_0^1$ to be the space of continuous piecewise linear finite element functions
with respect to $\mathcal{T}_h$ and $M$ to be the dimension of $H_h$. 

\paragraph{ The approximations $T_h$ and $L_h$.}
For any $F\in H^{-1}(\Omega)$, we define the finite element approximation $T_h F \in H_h$ of $TF \in H^1_0(\Omega)$ as the unique solution (invoking Lax-Milgram) to
$$A(T_h F,\phi_h)=\langle F,\phi_h\rangle,  \Forall \phi_h\in H_h .$$
Notice that for $f\in L^2(\Omega)$,
$T_hf=T_h \pi_h f$, where $\pi_h$ is the $L^2$-projection onto $H_h$.
We also define  $L_h\ :\ H_h \rightarrow H_h$ by
$$
(L_h v_h,\phi_h)=A(v_h,\phi_h), \Forall \phi_h\in H_h 
$$
and note that $L_h$ is the inverse of $T_h$ restricted to $H_h$.
Similar to $T$, $T_h|_{H_h}$ has positive 
eigenvalues $\{\mu_{j,h}\}_{j=1}^M$  with corresponding 
$L^2$-orthonormal eigenfunctions $\{\psi_{j,h}\}_{j=1}^M$. 
The eigenvalues of $L_h$ are denoted
by $\lambda_{j,h}:=\mu_{j,h}^{-1}$
 for $j=1,2,\cdots,M$.

Furthermore, we define $L_h^\beta\ :\ H_h \rightarrow H_h$ by
$$
L_h^\beta v_h:=\sum_{j=1}^M \lambda_{j,h}^\beta (v_h,\psi_{j,h})\psi_{j,h},$$
and the sesquilinear form $A^\beta_h(\cdot,\cdot)$ on $H_h \times H_h$ by
$$A^\beta_h(v_h,w_h):=\sum_{j=1}^M \lambda_{j,h}^\beta c_{j,h}\bar d_{j,h},\Forall v_h, w_h\in H_h.$$
Here
$$v_h:=\sum_{j=1}^M c_{j,h} \psi_{j,h}\qquad \hbox{and}\qquad
w_h:=\sum_{j=1}^M d_{j,h} \psi_{j,h}.
$$
We also define for $s\in [0,1]$, the discrete dotted norms $\|\cdot\|_{\dot{H}_h^s}$ by
$$
\|v_h\|_{\dot{H}_h^s}:=\Bigg(\sum_{j=1}^M\lambda_{j,h}^s \textcolor{black}{|(v_h,\psi_{j,h})|}^2\Bigg)^{1/2},\qquad \text{for } v_h\in H_h .
$$
It is well known (see e.g., Appendix A.2 in \cite{00BZ}) that for $s\in[0,1]$, there exists a constant $c$ independent of $h$ such that for all  $v_h\in H_h$,
\begin{equation}\label{ineq:H_h_H}
  \frac{1}{c}\|v_h\|_{\dot{H}_h^s}\le\|v_h\|_{\dot{H}^s}\le c\|v_h\|_{\dot{H}_h^s} .
\end{equation}

\paragraph{The Finite Element Approximation of the Initial Value Problem.}
The finite element approximation of \eqref{eq:weak_pro} considered here reads: find 
$u_h\in H^1(0,T;H_h)$ such that for 
  $t\in (0,T)$,
\begin{equation}\label{eq:weakfe}
\left\lbrace
\begin{aligned}
  (u_{h,t}(t),\phi_h)+A^\beta_h(u_h(t),\phi_h)&=0, \forall \phi_h\in H_h\text{, and}\\
  u_h(0)&=v_h:=\pi_h v.
\end{aligned}
\right.
\end{equation}
As in Section~\ref{preli}, the solution of \eqref{eq:weakfe} is given by 
\begin{equation}\label{eq:num_int}
\textcolor{black}{u_{h,t}=}e^{-tL^\beta_h}v_h :=\sum_{j=1}^M e^{-t\lambda_{j,h}^\beta} (v_h,\psi_{j,h})
\psi_{j,h} =
\frac{1}{2\pi i}\int_\cC e^{-tz^\beta }R_z(L_h)v_h\, dz .
\end{equation}
\textcolor{black}{
Note that $\lambda_{h,j} >0$, so that $e^{-t\lambda_{j,h}^\beta}<1$ and in particular the above finite element method is stable:
$$
\| u_{h,t} (t) \| \leq \| v_h \|.
$$
}
\paragraph{Approximation Results.}
By Lemma~5.1 of \cite{BP2015}, 
there exists a constant $c(s,\sigma)$ independent of $h$ such that for
$s\in [0,1]$ and $s+\sigma\le 2$,
\begin{equation}\label{pih-approx}
\|(I-\pi_h)f\|_{\widetilde H^s(\Omega)} \le c(s,\sigma) h^{\sigma}\|f\|_{\widetilde{H}^{s+\sigma}(\Omega)} .
\end{equation}
In addition, for $s\in [0,1]$,
\begin{equation}\label{pih-bound}
\|\pi_h f\|_{\widetilde {H}^{s}(\Omega)}\le c \|f\|_{\widetilde{H}^{s}(\Omega)}.
\end{equation}
The above estimate follows by definition when $s=0$, from \cite{bankdup,l2_stablity} when $s=1$,
and by interpolation for any intermediate $s$.
In addition, we recall the following result from \cite{frac14}.
\begin{proposition}[Corollary 4.2 of \cite{frac14}]\label{prop:T_error}
  Assume (a) holds, then there is a constant $C$ independent of $h$ such that for all $f\in \dot{H}^{\alpha-1}$
  $$\|(T-T_h)f\|_{\dot{H}^{1-\alpha}}\le C h^{2\alpha}\|f\|_{\dot{H}^{\alpha-1}} .$$
\end{proposition}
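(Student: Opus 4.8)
The plan is to treat this as a standard finite element error estimate refined by a duality (Aubin--Nitsche) argument, exploiting that $T_h$ is the Galerkin projection associated with the coercive Hermitian form $A(\cdot,\cdot)$. First I would record the Galerkin orthogonality relation: since $A(Tf,\phi_h)=\langle f,\phi_h\rangle=A(T_hf,\phi_h)$ for all $\phi_h\in H_h$, the error $e:=(T-T_h)f$ satisfies $A(e,\phi_h)=0$ for every $\phi_h\in H_h$. Combining this with the coercivity and boundedness of $A$ on $H^1_0(\Omega)=\dH^1$ yields C\'ea's lemma, $\|e\|_{\dH^1}\le (c_1/c_0)\inf_{\phi_h\in H_h}\|Tf-\phi_h\|_{\dH^1}$.

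Next I would convert the right-hand side into a convergence rate. Taking $\phi_h=\pi_h Tf$ and using the approximation estimate \eqref{pih-approx} with $s=1$, $\sigma=\alpha$ gives $\inf_{\phi_h}\|Tf-\phi_h\|_{\widetilde H^1(\Omega)}\le c\,h^\alpha\|Tf\|_{\widetilde H^{1+\alpha}(\Omega)}$, while Assumption (a) bounds $\|Tf\|_{\widetilde H^{1+\alpha}(\Omega)}\le C\|f\|_{\widetilde H^{-1+\alpha}(\Omega)}$. Since $\widetilde H^1(\Omega)=\dH^1$ and $\widetilde H^{-1+\alpha}(\Omega)=\dH^{\alpha-1}$ with equivalent norms (valid on $[-1,1]$, hence without Assumption (b)), this produces the energy estimate $\|e\|_{\dH^1}\le C\,h^\alpha\|f\|_{\dH^{\alpha-1}}$.

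For the weaker $\dH^{1-\alpha}$ norm I would run the duality argument. Identifying $\dH^{1-\alpha}$ as the dual of $\dH^{\alpha-1}$ under the $L^2(\Omega)$ pairing, I write $\|e\|_{\dH^{1-\alpha}}=\sup_{0\neq g\in\dH^{\alpha-1}}|(e,g)|/\|g\|_{\dH^{\alpha-1}}$. For each such $g$, set $w:=Tg$, which satisfies $A(w,\theta)=(g,\theta)$ for all $\theta$; taking $\theta=e$ and using the Hermitian symmetry of $A$ gives $(e,g)=\overline{(g,e)}=\overline{A(w,e)}=A(e,Tg)$. Galerkin orthogonality then lets me subtract any $\phi_h\in H_h$, so that $|(e,g)|=|A(e,Tg-\phi_h)|\le c_1\|e\|_{\dH^1}\inf_{\phi_h}\|Tg-\phi_h\|_{\dH^1}$. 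Applying the same approximation-plus-regularity chain to the dual solution $Tg$ yields $\inf_{\phi_h}\|Tg-\phi_h\|_{\dH^1}\le C\,h^\alpha\|g\|_{\dH^{\alpha-1}}$, and multiplying the two $h^\alpha$ factors gives $|(e,g)|\le C\,h^{2\alpha}\|f\|_{\dH^{\alpha-1}}\|g\|_{\dH^{\alpha-1}}$, whence the claimed $h^{2\alpha}$ bound follows after taking the supremum.

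The main obstacle is the dual regularity step: the duality argument requires Assumption (a) to apply to the dual problem, i.e. that $Tg\in\widetilde H^{1+\alpha}(\Omega)$ with the corresponding bound for $g\in\widetilde H^{-1+\alpha}(\Omega)$. This is legitimate precisely because $A$ is Hermitian, so $T$ is self-adjoint and the dual problem is governed by the same operator to which Assumption (a) applies; no separate hypothesis is needed. A secondary point to handle with care is the bookkeeping of the norm equivalences $\widetilde H^s(\Omega)\simeq\dH^s$, which must be invoked only in the range $s\in[-1,1]$ (guaranteed by Proposition~4.1 of \cite{frac14}), so that the argument does not silently rely on Assumption (b).
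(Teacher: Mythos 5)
Your argument is correct. The paper itself gives no proof of this proposition --- it is imported verbatim as Corollary~4.2 of \cite{frac14} --- and your C\'ea-plus-Aubin--Nitsche duality argument, with the energy estimate $\|(T-T_h)f\|_{\dot H^1}\le Ch^\alpha\|f\|_{\dot H^{\alpha-1}}$ squared up by the dual regularity that Assumption (a) supplies (legitimately applied to the dual problem since $A$ is Hermitian, hence $T$ self-adjoint), is essentially the proof given in that reference, including the correct restriction of the norm equivalences $\widetilde H^s(\Omega)\simeq\dot H^s$ to $s\in[-1,1]$ so that Assumption (b) is not needed.
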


\paragraph{Space Discretization Error.}
We now estimate the space discretization error for the initial value problem.

\begin{theorem}[Space Discretization Error for the initial value problem]\label{thm:semi_init}
 Assume that (a) and (b) hold for some $\alpha\in (0,1]$
 and that $v\in \dot{H}^{2\delta}$ for $\delta\ge 0$.
    Then there exists a constant $D(t)$ independent of $h$ such that
    \begin{equation}\label{e:num_rate}
    \|(e^{-tL^\beta}-e^{-tL^\beta_h}\pi_h) v\| \le D(t)h^{2\alpha}\|v\|_{\dot{H}^{2\delta}}
    \end{equation}
    where
    \begin{equation}\label{thm_const}
     D(t)=\left \{\bal C: & \qquad \hbox{if }\alpha<\min(\delta,1)
       , \\
     C\max\{1,\ln(1/t)\}:&\qquad \hbox{if } \alpha=1,\delta\ge
     \alpha\hbox{ or } \alpha<1,\delta=\alpha,\\
     Ct^{-(\alpha-\delta)/{\beta}}:& \qquad \hbox{if }\alpha>\delta\ge 0.
    \eal \right. 
\end{equation}
\end{theorem}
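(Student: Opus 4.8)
The plan is to represent the whole error as a single Dunford--Taylor integral and to estimate the integrand, a resolvent difference, pointwise in $z$ with an explicit power of $|z|$; after integrating against $e^{-tz^\beta}$ along the hyperbola \eqref{hyper-curve} this power alone dictates the three regimes in \eqref{thm_const}. Subtracting the representations \eqref{e:dt} and \eqref{eq:num_int}, and noting that the same contour $\cC$ encloses both spectra (one checks $\lambda_{1,h}=\min_{v_h\in H_h}A(v_h,v_h)/\|v_h\|^2\ge\lambda_1>b\sqrt2$, so $\cC$ is admissible for $L_h$), I would start from
\[
(e^{-tL^\beta}-e^{-tL^\beta_h}\pi_h)v=\frac{1}{2\pi i}\int_\cC e^{-tz^\beta}\,\big(R_z(L)-R_z(L_h)\pi_h\big)v\,dz .
\]
The first key step is an algebraic identity. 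Writing $L=T^{-1}$, $L_h=T_h^{-1}$, using $R_z(L)=-(I-zT)^{-1}T$, and manipulating the equations satisfied by $w:=R_z(L)v$ and $w_h:=R_z(L_h)\pi_h v$ (which also yields the simplification $v-zw=(I-zT)^{-1}v$), a short computation gives the double–resolvent form
\[
(R_z(L)-R_z(L_h)\pi_h)v=(I-zT_h)^{-1}(T_h-T)(I-zT)^{-1}v ,
\]
into which the finite element estimate of Proposition~\ref{prop:T_error} for $T-T_h$ can be inserted cleanly.

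The heart of the argument is then the pointwise bound
\[
\|(R_z(L)-R_z(L_h)\pi_h)v\|\le C\,h^{2\alpha}\,|z|^{\alpha-1-\delta}\,\|v\|_{\dH^{2\delta}},\qquad z\in\cC .
\]
Two facts about $\cC$ are used repeatedly: for real $\lambda\ge\lambda_1$ (resp. $\lambda\ge\lambda_{1,h}$) and $z\in\cC$ one has $|z-\lambda|\ge c(|z|+\lambda)$, and $\Re(z^\beta)\ge|z|^\beta\cos(\beta\pi/4)$ since $|\arg z|<\pi/4$ on \eqref{hyper-curve}. Diagonalising $(I-zT)^{-1}$ in $\{\psi_j\}$ and using $\sup_\lambda \lambda^{\alpha+1-2\delta}/|z-\lambda|^2\le C|z|^{\alpha-1-2\delta}$ gives $\|(I-zT)^{-1}v\|_{\dH^{\alpha-1}}\le C|z|^{(\alpha-1)/2-\delta}\|v\|_{\dH^{2\delta}}$; Proposition~\ref{prop:T_error} contributes the factor $h^{2\alpha}$ by mapping $\dH^{\alpha-1}$ into $\dH^{1-\alpha}$; and $(I-zT_h)^{-1}$, diagonalised in $\{\psi_{j,h}\}$ with the norm equivalence \eqref{ineq:H_h_H} and the stability \eqref{pih-bound} of $\pi_h$, supplies a further $|z|^{(\alpha-1)/2}$, for the total $|z|^{\alpha-1-\delta}$. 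The one delicate point is that $(T_h-T)(I-zT)^{-1}v\notin H_h$: I would split it through $\pi_h$, treat the $H_h$ part by the diagonal resolvent bound above, and control the orthogonal part $(I-\pi_h)T(I-zT)^{-1}v=-(I-\pi_h)w$ by \eqref{pih-approx} at the $\widetilde H^{2\alpha}$ level, where the regularity $w=R_z(L)v\in\dH^{2\alpha}$ (from (a), (b) and the spectral sum, giving $\|w\|_{\dH^{2\alpha}}\le C|z|^{\alpha-1-\delta}\|v\|_{\dH^{2\delta}}$) produces exactly the same power.

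Inserting this bound and using $|dz|=|z|\,dy$ on \eqref{hyper-curve} reduces everything to the scalar integral $\int_{-\infty}^\infty e^{-ct|z|^\beta}|z|^{\alpha-\delta}\,dy$ with $|z|\asymp e^{|y|}$, whose small–$t$ behaviour is governed solely by the sign of $q:=\alpha-\delta$: for $q<0$ the integrand decays in $y$ and the integral is bounded uniformly in $t$, giving $D(t)=C$; for $q=0$ the substitution $u=|z|^\beta$ turns it into an exponential integral $\sim\beta^{-1}\ln(1/t)$; and for $q>0$ the substitution $\tau=t|z|^\beta$ yields $\sim\Gamma(q/\beta)\,t^{-(\alpha-\delta)/\beta}$. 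These are precisely the three cases of \eqref{thm_const}. I expect the main obstacle to be the sharp pointwise bound: the bookkeeping of spectral weights so that the two resolvents and the $T-T_h$ estimate combine to the exact exponent $|z|^{\alpha-1-\delta}$ (a careless split loses a factor $|z|^{(1-\alpha)/2}$ and degrades case 3), together with the borderline analysis at $q=0$ and at $\alpha=1,\ \delta\ge\alpha$, which is where the logarithmic factor of case 2 originates.
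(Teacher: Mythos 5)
Your overall architecture is the paper's: a Dunford--Taylor representation of the error, the resolvent-difference identity $R_z(L)-R_z(L_h)\pi_h=(zT_h-I)^{-1}(T_h-T)(zT-I)^{-1}$ (which is correct as you state it), Proposition~\ref{prop:T_error} supplying the $h^{2\alpha}$, weighted resolvent bounds on the contour, and a scalar integral whose power $|z|^{\alpha-1-\delta}$ produces the three regimes. The choice of the hyperbola \eqref{hyper-curve} instead of the sector \eqref{e:contour} is immaterial. The one place where your plan genuinely breaks is the ``delicate point'' you flag yourself: the orthogonal component. After splitting $(T_h-T)(zT-I)^{-1}v$ through $\pi_h$, the off-$H_h$ piece is $-(I-\pi_h)R_z(L)v$, on which $(zT_h-I)^{-1}$ acts as $-I$, and you propose to bound it pointwise in $z$ by $\|(I-\pi_h)w\|\le Ch^{2\alpha}\|w\|_{\dot H^{2\alpha}}$ with $\|R_z(L)v\|_{\dot H^{2\alpha}}\le C|z|^{\alpha-1-\delta}\|v\|_{\dot H^{2\delta}}$. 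That resolvent bound requires $0\le\alpha-\delta\le1$ (your own Young-inequality step $\sup_\lambda\lambda^{\alpha-\delta}/(|z|+\lambda)\le C|z|^{\alpha-\delta-1}$ fails when $\alpha-\delta<0$: the supremum is then attained at $\lambda=\lambda_1$ and decays only like $|z|^{-1}$). So precisely in case~1 of \eqref{thm_const}, $\delta>\alpha$, the claimed pointwise bound is false, and inserting the true bound $\min(1,|z|^{-1})$ into $\int_\cC|e^{-tz^\beta}|\,|dz|/|z|$ costs a factor $\ln(1/t)$, so you do not recover $D(t)=C$. The paper avoids this by never estimating this piece under the integral: summing it over $\cC$ reproduces exactly $(I-\pi_h)e^{-tL^\beta}v$, which is pulled out \emph{before} estimating and bounded by $Ch^{2\alpha}\|e^{-tL^\beta}v\|_{\dot H^{2\alpha}}$, where $\sup_j\lambda_j^{\alpha-\delta}e^{-t\lambda_j^\beta}\le\lambda_1^{\alpha-\delta}$ is uniform in $t$ for $\delta\ge\alpha$. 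You should adopt that order of operations for this term.

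A second, smaller omission: your weighted bound on $(I-zT)^{-1}:\dot H^{2\delta}\to\dot H^{\alpha-1}$ likewise needs $0\le(1+\alpha)/2-\delta\le1$, so the argument only runs for $\delta\le(1+\alpha)/2$; the paper first reduces to this range via the embedding $\dot H^{2\delta}\subset\dot H^{2\delta'}$ (its case list \eqref{threeC}). This reduction is not cosmetic --- it is exactly why $\alpha=1$, $\delta\ge1$ lands in the logarithmic case~2 rather than in case~1 (for $\alpha=1$ one is forced down to $\delta'=1=\alpha$), a point your sketch gestures at but does not resolve. With the $(I-\pi_h)$ term handled at the semigroup level and the reduction in $\delta$ made explicit, your argument coincides with the paper's.
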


\begin{remark}[Asymptotic $t\to0$] If $\alpha<1$ then the above theorem guarantees the rate
  of $h^{2\alpha}$ for all $\delta>\alpha$ without any degeneration as
  $t\rightarrow 0$.
Note that the theorem only guarantees a rate of
of $C \ln(1/t)  h^2$ for small $t$ when  $\delta \ge 1$ and $\alpha=1$.   
In contrast, the classical analysis when $\beta=\alpha=\delta=1$
\cite{thomee} provides the rate $C h^2$ (without the $\ln(1/t)$ for
small $t$).
\end{remark}

Before proving the theorem, we introduce the following lemma whose proof
is postponed until after that of the theorem.

\begin{lemma} There is a positive constant $C(s)$ depending only on 
  $s\in [0,1]$
such that 
\beq|z|^{-s} \|T^{1-s} (z^{-1}I-T)^{-1}f\|\le C(s)\|f\|,\Forall z\in
\cC,f\in L^2(\Omega).
\label{lemeq}
\eeq
The same inequality holds on $H_h$ with $T$ replaced by $T_h$.
\label{tz-bound}
\end{lemma}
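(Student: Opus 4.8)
The plan is to diagonalize $T$ and reduce \eqref{lemeq} to a scalar inequality along the contour. Writing $f=\sum_j f_j\psi_j$ with $f_j:=(f,\psi_j)$ and recalling $T\psi_j=\mu_j\psi_j$ with $\mu_j=1/\lambda_j$, one computes
\begin{equation}
T^{1-s}(z^{-1}I-T)^{-1}f=\sum_j \mu_j^{1-s}\,\frac{z}{1-z\mu_j}\,f_j\,\psi_j .
\end{equation}
Since $\{\psi_j\}$ is $L^2(\Omega)$-orthonormal, $|z|^{-2s}\|T^{1-s}(z^{-1}I-T)^{-1}f\|^2=\sum_j \big(\tfrac{|w_j|^{1-s}}{|1-w_j|}\big)^2|f_j|^2$ with $w_j:=z\mu_j=z/\lambda_j$, where I used $|z|^{-s}\mu_j^{1-s}|z|=|w_j|^{1-s}$. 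Thus \eqref{lemeq} follows if I can bound $\frac{|w|^{1-s}}{|1-w|}\le C(s)$ uniformly over $z\in\cC$ and all eigenvalues, with $w=z/\lambda_j$.

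The crux, and the main obstacle, is a geometric separation estimate for the contour: I claim there is $c>0$ depending only on $b$ and $\lambda_1$ with
\begin{equation}
|z-\lambda|\ge c\,|z|\qquad\text{for all }z\in\cC\text{ and all real }\lambda\ge\lambda_1 .
\end{equation}
To prove this I write $z=\gamma(y)=b(\cosh y+i\sinh y)$, so $|z|^2=b^2\cosh 2y$ and $\re z=b\cosh y\ge b$, and set $y_0>0$ by $\cosh y_0=\lambda_1/b$ (possible since $\lambda_1/b>\sqrt2$). For $|y|\ge y_0$ one has $\re z\ge\lambda_1$, so the orthogonal projection of $z$ onto the real axis lands in $[\lambda_1,\infty)$ and hence $|z-\lambda|\ge|\im z|=b|\sinh y|$; the bound $|y|\ge y_0$ forces $\tanh^2 y\ge\tanh^2 y_0>0$, which yields $b^2\sinh^2 y\ge c_1^2\,b^2\cosh 2y$ for a suitable $c_1>0$ depending on $y_0$. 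For $|y|<y_0$ one has $\re z<\lambda_1\le\lambda$, hence $|z-\lambda|\ge|z-\lambda_1|$; on the compact set $\{|y|\le y_0\}$ the continuous function $|z-\lambda_1|$ is strictly positive (the contour meets the real axis only at $z=b\ne\lambda_1$) and therefore bounded below by some $m>0$, while $|z|\le\sqrt{2\lambda_1^2-b^2}=:M$, giving $|z-\lambda_1|\ge (m/M)|z|$. Taking $c=\min(c_1,m/M)$ proves the claim.

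Granting the separation estimate, since $\lambda_j\ge\lambda_1$ I obtain $|1-w|=|z-\lambda_j|/\lambda_j\ge c|z|/\lambda_j=c|w|$, and I split into two regimes: if $|w|\ge\tfrac12$ then $\frac{|w|^{1-s}}{|1-w|}\le c^{-1}|w|^{-s}\le c^{-1}2^{s}$; if $|w|<\tfrac12$ then $|1-w|\ge 1-|w|\ge\tfrac12$ and $\frac{|w|^{1-s}}{|1-w|}\le 2|w|^{1-s}\le 2^{s}$, both using $s\in[0,1]$. Hence the scalar bound holds with $C(s)=2^{s}\max(c^{-1},1)$, which depends only on $s$ (for the fixed data $b,\lambda_1$), and summing over $j$ gives \eqref{lemeq}. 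For the discrete statement the argument applies verbatim with $T,\psi_j,\lambda_j$ replaced by $T_h|_{H_h},\psi_{j,h},\lambda_{j,h}$: the restriction $T_h|_{H_h}$ is self-adjoint and positive, so it is diagonalized by the $L^2(\Omega)$-orthonormal basis $\{\psi_{j,h}\}_{j=1}^M$, and the min-max principle together with $H_h\subset H^1_0(\Omega)$ gives $\lambda_{j,h}\ge\lambda_{1,h}\ge\lambda_1>\sqrt2\,b$. Consequently the separation estimate, whose threshold is the fixed number $\lambda_1$, covers every discrete eigenvalue and the resulting $C(s)$ is independent of $h$.
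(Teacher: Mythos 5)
Your argument has the same skeleton as the paper's proof: diagonalize $T$ in the eigenbasis, reduce \eqref{lemeq} to a uniform scalar bound along the contour, establish a quantitative separation of the contour from the spectrum, and finish with an elementary inequality (the paper applies Young's inequality to $|z|^{-s}\mu_j^{1-s}/(|z|^{-1}+\mu_j)$ where you use a case split on $|w|$; the two are interchangeable). The scalar reduction, the case split, and the observation $\lambda_{1,h}\ge\lambda_{1}$ for the discrete statement are all correct and mirror the paper.

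The one concrete problem is that you prove the separation estimate for the wrong contour. The $\cC$ in Lemma~\ref{tz-bound} is the three-segment contour \eqref{e:contour} --- two rays at angles $\pm\pi/4$ joined by a circular arc of radius $r_0<\lambda_1$ --- since that is the contour used in the proof of Theorem~\ref{thm:semi_init}, which is where the lemma is invoked, and it is the contour the paper's own proof of the lemma analyzes (treating $\cC_2$ and $\cC_1\cup\cC_3$ separately). You instead work with the hyperbola \eqref{hyper-curve}, which enters only in the quadrature analysis of Section 4. Your bound $|z-\lambda|\ge c\,|z|$ for the hyperbola is itself proved correctly, but as written your proof does not cover the points $z$ at which the lemma is actually applied. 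The repair is immediate and in fact simpler than your hyperbola computation: for $z=re^{\pm i\pi/4}\in\cC_1\cup\cC_3$ and real $\lambda\ge 0$ one has $|z-\lambda|\ge|\im (z)|=|z|/\sqrt 2$, while for $z\in\cC_2$ one has $|z-\lambda|\ge\lambda_1-r_0=\frac{\lambda_1-r_0}{r_0}\,|z|$; with this substitution the remainder of your argument goes through verbatim, for both $T$ and $T_h$.
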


We now provide the proof of Theorem~\ref{thm:semi_init} which relies on the following contour $\cC$ in the Dunford-Taylor representations \eqref{e:dt} and \eqref{eq:num_int}:
Given $r_0\in (0,\lambda_1)$, it 
consists in three segments:
\begin{equation}\label{e:contour}
\begin{aligned}
\cC_1&:=\left\lbrace z(r):=re^{-i\pi/4}  \text{ with  }r  \text{ real going from }+\infty \text{ to } r_0\right\rbrace \text{ followed by } \\
\cC_2&:=\left\lbrace z(\theta):=r_0 e^{i\theta} \text{ with }\theta \text{ going from }-\pi/4 \text{ to } \pi/4\right\rbrace \text{ followed by } \\
\cC_3&:=\left\lbrace z(r):=re^{i\pi/4} \text{ with } r \text{ real going from } r_0 \text{ to }+\infty \right\rbrace.
\end{aligned}
\end{equation}

\begin{proof}[Proof of Theorem~\ref{thm:semi_init}.]
The continuous embedding $\dot{H}^{s}\subset\dot{H}^{t}$ when $s>t\ge 0$
implies that it suffices to prove the estimates of the theorem when
\beq\bal
(i) \ \alpha&<\delta\le (1+\alpha/2),\\
(ii)\  \delta&=\alpha \hbox{ and } \alpha \in (0,1],\hbox{ and }\\
(iii)\ \alpha&>\delta\ge 0.
\eal
\label{threeC}
\eeq
We write
$$
\|(e^{-tL^{\beta}}-e^{-tL^{\beta}_h}\pi_h)v\|\le \|(I-\pi_h)e^{-tL^{\beta}}v\| + 
\|\pi_h (e^{-tL^{\beta}}-e^{-tL^{\beta}_h}\pi_h)v\|.
$$
\eat{and first show
\beq
\|(I-\pi_h)(e^{-tL^{\beta}}-e^{-tL^{\beta}_h}\pi_h)v\|\le C(t)
h^{2\alpha} \|v\|.
\label{firstpart}
\eeq
Note that
$$(I-\pi_h)(e^{-tL^{\beta}}-e^{-tL^{\beta}_h}\pi_h)v=(I-\pi_h)e^{-tL^{\beta}}v$$
}

$\boxed{1}$ The approximation property \eqref{pih-approx} of $\pi_h$ immediately yields
$$\|(I-\pi_h)e^{-tL^{\beta}}v\|\leq C
h^{2\alpha}\|e^{-tL^{\beta}} v\|_{\dot{H}^{2\alpha}} .$$
We estimate $\|e^{-tL^\beta}v\|_{\dot{H}^{2\alpha}}$ by expanding $v$ in the basis generated by the eigenfunction of $L$. Let
$c_j:=(v,{\psi}_j)$ be the Fourier coefficient of $v$. We distinguish two
cases. 
When $\delta\ge\alpha$, we use the representation \eqref{e:serie_sol} of $e^{-tL^\beta}v$ to write
$$\|e^{-tL^{\beta}}v\|^2_{\dot{H}^{2\alpha}}=
\sum_{j=1}^{\infty}\lambda_j^{2\alpha}\underbrace{e^{-2 \lambda_j^\beta}}_{\leq 1} |c_j|^2
\le \lambda_1^{2(\alpha-\delta)}
\sum_{j=1}^{\infty}\lambda_j^{2\delta}|c_j|^2=\lambda_1^{2(\alpha-\delta)}
\|v\|_{\dot{H}^{2\delta}}^2.$$
Otherwise, when $\delta < \alpha$,
$$
\|e^{-tL^{\beta}} v\|_{\dot{H}^{2\alpha}}^2 
=t^{-2(\alpha-\delta)/{\beta}}\sum_{j=1}^\infty
\lambda_j^{2\delta}|(t\lambda^\beta_j)^{(\alpha-\delta)/{\beta}}  e^{-t \lambda_j^{\beta}}|^2 |c_j|^2\leq
Ct^{-2(\alpha-\delta)/{\beta}}
\|v\|^2_{\dot{H}^{2\delta}},$$
where for the last inequality we used that $x^\eta e^{-x}\le
C(\eta)=C $ for $x\ge 0$ and $\eta=(\alpha-\delta)/\beta$.

$\boxed{2}$ We are now left to bound
\beq 
\|\pi_h(e^{-tL^{\beta}}-e^{-tL^{\beta}_h}\pi_h)v\|.
\label{tobound}
\eeq
The Dunford-Taylor integral representation gives
$$\pi_h(e^{-tL^{\beta}}-e^{-tL^{\beta}_h}\pi_h)v=\frac{1}{2\pi
  i}\int_{\cC}e^{-tz^\beta}\pi_h(R_z(L)-R_z(L_h)\pi_h)v \, dz$$
from which we deduce that
$$\|\pi_h(e^{-tL^{\beta}}-e^{-tL^{\beta}_h}\pi_h)v\|\leq \frac{1}{2\pi}\int_{\cC}|e^{-tz^\beta }| \|\pi_h(R_z(L)-R_z(L_h)\pi_h)v\|\, d|z| .$$
Noting that $(z-L)^{-1}=T(zT-I)^{-1}$ and 
 $(z-L_h)^{-1}=(zT_h-I)^{-1}T_h$, we obtain 
 \textcolor{black}{
\begin{align*}
\pi_h(R_{z}(L)-R_{z}(L_h)\pi_h)&=\pi_h((z-L)^{-1}-(z-L_h)^{-1})\\
&=\pi_h(T(z T-I)^{-1}-(z T_h-I)^{-1}T_h)\\
&=\pi_h(zT_h-I)^{-1}((zT_h-I)T-T_h(zT-I))(zT-I)^{-1}\\
&=\pi_h(zT_h-I)^{-1}(T_h-T)(zT-I)^{-1}\\
&=-(z T_h -I)^{-1}\pi_h(T-T_h)(z T-I)^{-1},
\end{align*}
where for the last step we used the fact that $\pi_h(zT_h-I)^{-1}=(zT_h-I)^{-1}=(zT_h-I)^{-1}\pi_h$.}
Whence,
$$
\|\pi_h(e^{-tL^{\beta}}-e^{-tL^{\beta}_h}\pi_h)v\|\le 
C\int_{\cC}|e^{-tz^\beta}| |z|^{-1+\alpha-\delta} \|W(z)\| \, d|z|
$$
with
$$W(z):= |z|^{1-\alpha+\delta} (z T_h -I)^{-1}\pi_h(T-T_h)(z T-I)^{-1}v.
$$
To complete the proof, we show first that
\beq 
\|W(z)\| \le C h^{2\alpha} \|v\|_{\dH^{2\delta}}
\label{w-bound}
\eeq
for a constant $C$ independent of $h$ and $z$ 
and then that
\beq 
\int_{\cC}|e^{-tz^\beta}| |z|^{-1+\alpha-\delta}  \, d|z|
\le  D(t).
\label{i-bound}
\eeq

$\boxed{3}$ We start with \eqref{w-bound}.  
Rewriting
$$
(z T_h -I)^{-1}\pi_h(T-T_h)(z T-I)^{-1}=z^{-2}( T_h
-z^{-1})^{-1}\pi_h(T-T_h)( T-z^{-1})^{-1},
$$
we deduce that
\beq\bal
\|W(z)\|&\le\underbrace{\|z^{-(1+\alpha)/2}( T_h
-z^{-1})^{-1}\pi_h\|_{\dot{H}^{1-\alpha}\rightarrow
  L_2(\Omega)}}_{:=\mathrm{I}}\underbrace{\|(T-T_h)\|_{\dot{H}^{\alpha-1}\rightarrow\dot{H}^{1-\alpha}}}_{:=\mathrm{II}}\\
&\underbrace{\|z^{-(1+\alpha)/2+\delta}(
T-z^{-1})^{-1}\|_{\dot{H}^{2\delta}\rightarrow\dot{H}^{\alpha-1}}}_{:=\mathrm{III}}.
\eal
\label{threet}
\eeq
Now, we estimate the three terms in the right hand side separately.
For $\mathrm{III}$ we have
$$\bal
\|(T-z^{-1})^{-1}\|_{\dot{H}^{2\delta}\rightarrow\dot{H}^{\alpha-1}}
&=\sup_{w\in \dot{H}^{2\delta}} \frac {\|T^{(1-\alpha)/2} (T-z^{-1})^{-1} w\|}
{\|L^\delta w\|}\\
& = \sup_{\theta\in L^2(\Omega)} \frac {\|T^{(1-\alpha)/2} (T-z^{-1})^{-1}
  T^\delta \theta\|}
{\|\theta\|} =\|T^{1-[(1+\alpha)/2-\delta]} (T-z^{-1})^{-1}\|.
\eal
$$
We see that for all three cases \eqref{threeC},
$0\le s:=(1+\alpha)/2-\delta\le 1$ 
so that Lemma~\ref{tz-bound} applies and
\beq
 \mathrm{III} = \|z^{-(1+\alpha)/2+\delta}(
T-z^{-1})^{-1}\|_{\dot{H}^{2\delta}\rightarrow\dot{H}^{\alpha-1}}\le 
C.
\label{thirdt}
\eeq

To estimate $\mathrm{I}$, we invoke  \eqref{pih-bound} and  \eqref{ineq:H_h_H} to write
\begin{equation}
\begin{split}
\textcolor{black}{ \|( T_h
-z^{-1})^{-1}\pi_h \|_{\dot{H}^{1-\alpha}\rightarrow  L_2}}
&\le \|( T_h-z^{-1})^{-1}\|_{\dot{H}_h^{1-\alpha}\rightarrow L_2}
\|\pi_h\|_{\dot{H}^{1-\alpha}\rightarrow \dot{H}_h^{1-\alpha}} \\
&\leq C \|( T_h-z^{-1})^{-1}\|_{\dot{H}_h^{1-\alpha}\rightarrow L_2}\\
&\leq C \|T_h^{1-(1+\alpha)/2}( T_h-z^{-1})^{-1}\|.
\end{split}
\label{firstt}
\end{equation}
Applying Lemma~\ref{tz-bound} again gives
\beq
\mathrm{I} = \|z^{-(1+\alpha)/2}
(T_h
-z^{-1})^{-1}\pi_h \|_{\dot{H}_h^{1-\alpha}\rightarrow  L_2}
\le C.
\label{firstfinal}
\eeq
Combining \eqref{thirdt}, \eqref{firstfinal} and applying 
Proposition~\ref{prop:T_error} to estimate $\mathrm{II}$ yield \eqref{w-bound}.

$\boxed{4}$ We finally prove \eqref{i-bound}.  Note that $|z|= r_0$ for $z\in\cC_2$ and hence 
$$\int_{\cC_2}|e^{-tz^\beta}| |z|^{-1+\alpha-\delta}  \, d|z|\le C.$$
For the remaining part of the contour, we have
\beq
\mathscr{I}:=\int_{\cC_1\cup \cC_3}|e^{-tz^\beta}| |z|^{-1+\alpha-\delta}  \, d|z|
= 2 
\int_{r_0}^{\infty}e^{-\cos(\beta\pi/4)tr^\beta}r^{-1+\alpha-\delta}dr.
\label{intbound}
\eeq
If $\delta>\alpha$
$$\mathscr{I}\le C\int_{r_0}^\infty r^{-1+\alpha-\delta}\, dr\le C .$$
If $\delta<\alpha$, making the change of variable
$y=\cos(\beta\pi/4)tr^\beta$ gives
$$\mathscr{I}=Ct^{(\delta-\alpha)/\beta}\int_{\cos(\pi \beta/4)tr_0^\beta}^\infty
e^{-y}y^{-1+(\alpha-\delta)/\beta}\,dy\le \frac{C}{\beta}t^{(\delta-\alpha)/\beta}\Gamma\Big(\frac{\alpha-\delta}{\beta}\Big)$$
where $\Gamma(x)$ is the Gamma function.  Finally, when $\delta=\alpha$,
the same change of variables gives 
$$\mathscr{I}=C\int_{\cos(\pi \beta/4)tr_0^\beta}^\infty
e^{-y}y^{-1}\,dy.$$
If $\cos(\pi \beta/4)tr_0^\beta\ge 1$  then 
$$\mathscr{I}\le C\int_1^\infty e^{-y}y^{-1}\,dy
\le C\int_1^\infty e^{-y}\,dy=C/e.$$
Otherwise, splitting the integral gives
$$\bal
\mathscr{I}&\le C\int_{\cos(\pi
  \beta/4)tr_0^\beta}^1e^{-y}y^{-1}\,dy+C/e\\
&\le C\int_{\cos(\pi \beta/4)tr_0^\beta}^1y^{-1}\,dy+C/e
\le C\max\{1,\ln(1/t)\}.
\eal
$$
This completes the proof of the theorem.
\end{proof}

\begin{proof} [Proof of Lemma~\ref{tz-bound}] 
As the proof of the
  continuous and discrete cases are essentially identical, we only provide 
the proof for the former.  
For $z\in \cC_2$, the triangle inequality
implies
$$r_0^{-1}-\mu_1\le r_0^{-1}-\mu_j \le |z^{-1}-\mu_j|.$$
Also,
$$|z|^{-1}+\mu_j\le 2r_0^{-1}$$
so 
$$\frac {r_0^{-1}-\mu_1}{2r_0^{-1}} (|z|^{-1}+\mu_j) \le
|z^{-1}-\mu_j|.$$

If $z \in \cC_3$ then $z^{-1}$ is on the line connecting 0 to
$r_0^{-1}e^{i\pi/4}$. It follows that $|z^{-1}-\mu_j| \ge \Im(z^{-1})=
|z|^{-1}/\sqrt2$. Also, $|z^{-1}-\mu_j|$ is greater than or equal to the
distance from $\mu_j$ to the line segment, i.e., $|z^{-1}-\mu_j|\ge
\mu/\sqrt2$.  The same inequalities hold for $\cC_1$ and hence
$$|\mu_j-z^{-1}|\geq
\frac{\sqrt{2}}{4}(\mu_j+|z|^{-1}),\Forall z\in \cC_1\cup \cC_3.$$
 Thus, we have shown that for every $z\in \cC$
\beq
|\mu_j-z^{-1}|^{-1} \le C (\mu_j+|z|^{-1})^{-1}.
\label{mubound}
\eeq

Expanding the square of left hand side of \eqref{lemeq} gives
\beq
\mathcal W:=|z|^{-2s}\|T^{1-s} (z^{-1}I-T)^{-1}f\|^2=\sum_{j=1}^\infty
\bigg( \frac {|z|^{-s} \mu_j^{1-s}} {|z^{-1}-\mu_j|}\bigg)^2
|(f,\psi_j)|^2.
\label{lem1i}
\eeq
Thus, \eqref{mubound} implies
$$\mathcal W\le C
\sum_{j=1}^\infty
\bigg( \frac {|z|^{-s} \mu_j^{1-s}} {|z|^{-1}+\mu_j}\bigg)^2
|(f,\psi_j)|^2.
$$
A Young's inequality yields
$$\frac {|z|^{-s} \mu_j^{1-s}} {|z|^{-1}+\mu_j} \leq 
\frac{s  |z|^{-1} +(1-s)\mu_j}{|z|^{-1}+\mu_j}\le 1
$$
so that 
$$\mathcal W \le C\|f\|^2 .$$
This completes the proof of the lemma.
\end{proof}

To end this section, we consider the non-honmogeneous parabolic equation
for a given $f \in L^2(0,T;L^2(\Omega))$ but with zero initial data,
i.e., 
find $u \in L^2(0,T; \dot{H}^\beta)$ such that $u_t \in L^2(0,T;\dH^{-\beta})$ and for a.e $t\in (0,T)$
\begin{equation}\label{eq:weak_nonho}
\left\lbrace
\begin{aligned}
  \textcolor{black}{\langle u_t(t),\phi\rangle}+A^\beta(u(t),\phi)&=(f(t),\phi), \qquad\Forall \phi\in\dot{H}^\beta\hbox{ and}\\
  u(0)&=0. 
  \end{aligned}
\right.
\end{equation}
By Duhamel's principle, the solution to \eqref{eq:weak_nonho} is given by
$$u(t)=\int_0^t e^{-(t-s)L^\beta}f(s)\, ds .$$
The corresponding finite element approximation reads: find $u_h\in H^1(0,T;H_h)$ such that
for a.e $t\in (0,T)$
\begin{equation}\label{eq:weakfe_nonho}
\left\lbrace
\begin{aligned}
  (u_{h,t},\phi_h)+A^\beta_h(u_h,\phi_h)&=(f(t),\phi_h), \qquad\forall \phi_h\in H_h\hbox{ and}\\
  u_h(0)&= 0,
\end{aligned}
\right.
  \end{equation}
or 
$$u_h(t)=\int_0^t e^{-(t-s)L_h^\beta}\pi_h f(s)\, ds .$$
Applying Theorem \ref{thm:semi_init} we obtain that
$$
\bal
\|u(t)-u_h (t)\|&\le \int_0^t \|(e^{-sL^\beta}-e^{-sL_h^\beta}\pi_h)f(s)\|\, ds\\
&\le C h^{2\alpha} \int_0^t D(s)\|f(t-s)\|_{\dH^{2\delta}}\, ds ,
\eal
$$
where $D(s)$ is given in \eqref{thm_const}.
Therefore, the optimal convergence rate $2\alpha$ is achieved provided the above integral is finite.
For example,  if $f \in L^{\infty}(0,T;\dot{H}^{2\delta})$, we have the following corollary.
\begin{corollary}[Space discretization for Non-Homeogenous Right Hand Side]\label{cor:nonho_semi}
Assume that (a) and (b) hold for $\alpha\in (0,1]$. 
For $\delta>0$ with $\delta \not = \alpha$ assume furthermore that $f\in L^{\infty}(0,T;\dot{H}^{2\delta})$ and denote 
$u$ and $u_h$ to be  the solutions of
 \eqref{eq:weak_nonho} and \eqref{eq:weakfe_nonho}, respectively.
In addition, for any sufficiently small $\epsilon>0$, set $\overline \alpha := \min(\alpha,\beta+\delta-\epsilon)>0$.
Then, there exists a constant $C$ such that 
 $$\|u(t)-u_h (t)\|\le  D(h,t)\|f\|_{L^{\infty}(0,T;\dot{H}^{2\delta})},$$
where
    \begin{equation}
     D(h,t):=\left \{\bal Cth^{2\alpha}: & \qquad\hbox{if } \alpha<\delta, \\
     Ct^{1-\frac{\overline\alpha-\delta}{\beta}}h^{2\overline \alpha}:& \qquad\hbox{if } \alpha>\delta.\\ 
    \eal \right. 
\end{equation}
 \end{corollary}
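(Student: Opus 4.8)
The plan is to reduce the estimate to the semidiscrete bound of Theorem~\ref{thm:semi_init} applied inside the Duhamel integral. Writing
$$u(t)-u_h(t)=\int_0^t\big(e^{-(t-s)L^\beta}f(s)-e^{-(t-s)L^\beta_h}\pi_h f(s)\big)\,ds$$
and substituting $\tau=t-s$, the triangle inequality gives
$$\|u(t)-u_h(t)\|\le\int_0^t\big\|(e^{-\tau L^\beta}-e^{-\tau L^\beta_h}\pi_h)f(t-\tau)\big\|\,d\tau.$$
Applying Theorem~\ref{thm:semi_init} pointwise in $\tau$ to $v=f(t-\tau)$ and bounding $\|f(t-\tau)\|_{\dot H^{2\delta}}\le\|f\|_{L^\infty(0,T;\dot H^{2\delta})}$ reduces the whole statement to controlling $\int_0^t D(\tau)\,d\tau$, where $D$ is the time-dependent constant of \eqref{thm_const} (with $t$ replaced by $\tau$). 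The corollary is then a matter of integrating the relevant profile of $D$ in $\tau$ against the fixed spatial factor $h^{2\alpha}$ (or $h^{2\overline\alpha}$).

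In the regime $\alpha<\delta$ we have $\overline\alpha=\alpha$, since $\alpha<\delta<\beta+\delta-\epsilon$ for small $\epsilon$; here $D(\tau)=C$ is the constant branch of \eqref{thm_const}, so $\int_0^t D(\tau)\,d\tau=Ct$ and we obtain the rate $Cth^{2\alpha}$. The delicate regime is $\alpha>\delta$, where the theorem furnishes the singular profile $D(\tau)=C\tau^{-(\alpha-\delta)/\beta}$. The integral $\int_0^t\tau^{-(\alpha-\delta)/\beta}\,d\tau$ converges at the origin only when $(\alpha-\delta)/\beta<1$, that is $\alpha<\beta+\delta$. This integrability condition at $\tau=0$ is exactly the main obstacle, and it is what forces the introduction of the reduced exponent $\overline\alpha=\min(\alpha,\beta+\delta-\epsilon)$.

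To remove the obstruction I would apply Theorem~\ref{thm:semi_init} not with $\alpha$ but with $\overline\alpha$. Two points must be checked. First, assumptions (a) and (b) hold with $\overline\alpha$ in place of $\alpha$: they hold trivially at exponent $0$ (Lax--Milgram) and are assumed at $\alpha$, so interpolating the maps $T$ and $F$ between these endpoints yields them for every exponent in $[0,\alpha]$, and $\overline\alpha\le\alpha$. Second, we must remain in the singular branch of \eqref{thm_const}, i.e. $\overline\alpha>\delta$; indeed $\alpha>\delta$ and $\beta+\delta-\epsilon>\delta$ whenever $\epsilon<\beta$, so the minimum defining $\overline\alpha$ exceeds $\delta$. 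With this choice $(\overline\alpha-\delta)/\beta<1$ strictly, whence
$$\int_0^t\tau^{-(\overline\alpha-\delta)/\beta}\,d\tau=\frac{1}{1-(\overline\alpha-\delta)/\beta}\,t^{\,1-(\overline\alpha-\delta)/\beta},$$
and combining with the spatial rate $h^{2\overline\alpha}$ produces $Ct^{1-(\overline\alpha-\delta)/\beta}h^{2\overline\alpha}$, matching the stated $D(h,t)$. The role of $\epsilon$ is precisely to keep this time exponent strictly below $1$ in the borderline case $\alpha\ge\beta+\delta$, at the mild cost of lowering the spatial rate from $h^{2\alpha}$ to $h^{2\overline\alpha}$; when $\alpha<\beta+\delta$ one simply takes $\epsilon$ small enough that $\overline\alpha=\alpha$ and recovers the full rate $h^{2\alpha}$ with time factor $t^{1-(\alpha-\delta)/\beta}$.

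I expect the routine parts (the Duhamel splitting and the elementary $\tau$-integration) to be straightforward; the conceptual heart of the argument is the interplay between the temporal singularity of $D(\tau)$ and the spatial order, resolved by the interpolation argument that transfers (a) and (b) to $\overline\alpha$ and the trade-off encoded in $\overline\alpha=\min(\alpha,\beta+\delta-\epsilon)$.
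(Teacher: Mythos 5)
Your argument is correct and is essentially the paper's own proof: the authors likewise write the error via Duhamel, apply Theorem~\ref{thm:semi_init} under the time integral with $\|f(t-\tau)\|_{\dot H^{2\delta}}$ bounded by its $L^\infty$ norm, and reduce the corollary to the integrability of the constant $D(\tau)$ of \eqref{thm_const}, with $\overline\alpha=\min(\alpha,\beta+\delta-\epsilon)$ introduced precisely so that the singular branch $\tau^{-(\overline\alpha-\delta)/\beta}$ is integrable at $\tau=0$; your interpolation remark transferring assumptions (a)--(b) from $\alpha$ to $\overline\alpha$ supplies the detail the paper leaves implicit. The only caveat --- inherited from the paper's own statement rather than introduced by you --- is the edge case $\alpha=1<\delta$, where \eqref{thm_const} gives the logarithmic branch rather than a constant, so the first case of $D(h,t)$ then acquires an extra factor of $1+\ln(1/t)$.
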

 
\section{Quadrature approximation for \eqref{eq:num_int}}
\label{quad}
In this section, we develop exponentially convergent quadrature approximations to 
\eqref{eq:num_int} based on the contour $\cC$ given by
\eqref{hyper-curve}. To this end, we extend $\gamma$ to the complex
plane, i.e.,   
\beq \gamma(z):=b(\cosh{ z}+i\sinh{ z}),\quad z\in \CC.
\label{gamma}
\eeq
We then have
for $w_h\in H_h$, 
\begin{equation}\label{eq:init_sol}
\bal
e^{-tL^{\beta}_h}w_h&=\frac{1}{2\pi i}\int_{\widehat{\cC}}e^{-tz^\beta}R_z(L_h)w_h\, dz\\
&=\frac{1}{2\pi i} \int_{-\infty}^\infty e^{-t \gamma(y)^\beta} \gamma'(y)[(\gamma(y)I-L_h)^{-1}w_h] \, dy .
\eal
\end{equation}
The sinc quadrature approximation $Q_h^{N,k}(t)w_h$ to  \eqref{eq:init_sol} with $2N+1$ quadrature points and quadrature spacing parameter $k>0$ is defined by
\begin{equation}\label{eq:initq}
  Q_h^{N,k}(t)w_h:=\frac{k}{2\pi i}\sum_{j=-N}^{N} e^{-t \gamma(y_j)^\beta}\gamma'(y_j)
[(\gamma(y_j)I-L_h)^{-1}w_h]\  
\end{equation}
with $y_j:=jk$.

Expanding $w_h$ in terms of the discrete eigenvector basis 
$\{\psi_{j,h}\}$ gives
\beq
\bal
\| (e^{-tL_h^\beta}- Q_h^{N,k}(t)) w_h\|^2 &= (2\pi )^{-2}\sum_{j=1}^M |\cE(\lambda_{j,h},t)|^2
|(w_h,\psi_{j,h})|^2\\ & \le 
(2\pi)^{-2} \max_{j=1,\cdots,M} |\cE(\lambda_{j,h},t)|^2 \|w_h\|^2,
\eal
\label{errorl}
\eeq
where 
$$\cE({\lambda},t):=  \int_{-\infty}^{\infty} g_{\lambda}(y,t)\, dy -
 k\sum_{j=-N}^{N}g_{\lambda}(j k,t)$$
and
\begin{equation}\label{eq:lamdafun}
  g_\lambda(z,t)=  
e^{-t\gamma(z)^\beta}(\gamma(z)-\lambda)^{-1}\gamma'(z),\quad \hbox{ for
  } z\in \CC,t>0.
\end{equation} 

The theorem below guarantees the exponential decay of the quadrature error. It uses the following notations for $b \in (0,\lambda_1/\sqrt{2})$, $N$ an integer, $k>0$ as above and  $d\in  (0,\pi/4)$: 
\beq
\bal  
\kappa&:=\cos\big[\beta(\pi/4+d)\big] \bigg[\sqrt 2 b  \sin(\pi/ 4-d)\bigg]^\beta,\\
  N(d,t)&:=\max_{\lambda\ge \lambda_1} \bigg \{\int_{-\infty}^\infty
  |g_\lambda(y+id,t)|+|g_\lambda(y-id,t)|\ dy\bigg \}, \quad \hbox{and}\\
M(t)&:=(1+\cL(\kappa t)),\qquad \hbox{ where }\cL(x):=1+|\ln(1-e^{-x})|.
\eal
\label{kappaetc}
\eeq

\begin{theorem} [Quadrature Theorem] \label{quad_theorem}
For integer $N$ and real number $k>0$, 
let $Q_h^{N,k}(t)$ be the sinc quadrature approximation given by
\eqref{eq:initq}. Then  there is a constant $C$ not depending on $t$, $h$, $k$ and $N$ such that 
 \beq
\|e^{-tL_h^\beta}-Q_h^{N,k}(t)\|_{L^2(\Omega)\rightarrow L^2(\Omega)}
 \leq C\Big(\frac{N(d,t)}
{e^{2\pi d/k}-1}+\frac{M(t)\cosh(kN)} {\sinh(kN)} 
e^{-\kappa 2^{-\beta} t e^{kN\beta}}
\Big).
\label{Therror}
\eeq
The function $N(d,t)$ is uniformly bounded  when $t$ is bounded
away from zero and bounded by $CM(t)$ 
as $t\rightarrow 0$.
\end{theorem}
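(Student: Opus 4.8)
The plan is to reduce the operator norm error to a scalar quadrature error and then split that error into a \emph{discretization} part (replacing the integral by the full bi-infinite sinc sum) and a \emph{truncation} part (discarding the terms with $|j|>N$). By \eqref{errorl} it suffices to bound $\max_{\lambda\ge \lambda_1}|\cE(\lambda,t)|$: here I use that the Galerkin eigenvalues satisfy $\lambda_{j,h}\ge \lambda_{1,h}\ge \lambda_1$ by the min--max principle (the Rayleigh quotient is minimized over the subspace $H_h\subset H^1_0(\Omega)$), so every $\lambda_{j,h}$ lies in the range $[\lambda_1,\infty)$ over which $N(d,t)$ and $\kappa$ are defined. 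Writing
\[
\cE(\lambda,t)=\Big(\int_{-\infty}^\infty g_\lambda(y,t)\,dy-k\sum_{j=-\infty}^\infty g_\lambda(jk,t)\Big)+k\sum_{|j|>N}g_\lambda(jk,t),
\]
the two summands are estimated separately and produce, respectively, the first and second terms on the right-hand side of \eqref{Therror}.

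First I would treat the discretization error by the classical sinc (Stenger) estimate: if $g_\lambda(\cdot,t)$ is analytic in the open strip $\{|\Im z|<d\}$ and integrable on its horizontal boundaries, then the sinc error is bounded by $(e^{2\pi d/k}-1)^{-1}\int_{-\infty}^\infty\big(|g_\lambda(y+id,t)|+|g_\lambda(y-id,t)|\big)\,dy$, and taking the maximum over $\lambda\ge\lambda_1$ yields exactly $N(d,t)/(e^{2\pi d/k}-1)$. The hypothesis of this estimate is the essential point: I must show $g_\lambda(\cdot,t)$ has no pole in the strip, i.e.\ $\gamma(z)\ne\lambda$ whenever $|\Im z|<d<\pi/4$ and $\lambda\ge\lambda_1$. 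Separating real and imaginary parts via $\gamma(x+iy)=b\big[\cosh x(\cos y-\sin y)+i\sinh x(\cos y+\sin y)\big]$, the equation $\gamma(z)=\lambda\in\RR$ forces $\sinh x=0$ (since $\cos y+\sin y>0$ for $|y|<\pi/4$), hence $x=0$ and $\gamma(iy)=b(\cos y-\sin y)\in(0,\sqrt2\,b)$; because $b<\lambda_1/\sqrt2$ this is strictly below $\lambda_1\le\lambda$. This is precisely where the constraint $b\in(0,\lambda_1/\sqrt2)$ enters.

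Next I would bound the truncation tail using the double--exponential decay of $g_\lambda$ along the real axis. Since $\arg\gamma(y)=\arctan(\tanh y)\in(-\pi/4,\pi/4)$ and $|\gamma(y)|=b\sqrt{\cosh 2y}\ge 2^{-1/2}b\,e^{|y|}$, one obtains $\Re(\gamma(y)^\beta)=|\gamma(y)|^\beta\cos(\beta\arg\gamma(y))\ge \kappa\,2^{-\beta}e^{\beta|y|}$, a deliberately lossy lower bound engineered so that the \emph{same} constant $\kappa$ controlling the strip also controls the real axis. Combined with the uniform estimate $|\gamma'(y)|/|\gamma(y)-\lambda|\le C$ for $\lambda\ge\lambda_1$ (which again uses that the hyperbola stays a fixed distance from $[\lambda_1,\infty)$), this gives $|g_\lambda(y,t)|\le C\,e^{-\kappa 2^{-\beta}t\,e^{\beta|y|}}$. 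Summing the tail $k\sum_{|j|>N}$ of this doubly exponentially small quantity, which is dominated by its endpoint value at $|y|=kN$, produces the factor $e^{-\kappa 2^{-\beta}t\,e^{kN\beta}}$; the residual sum of the slowly varying prefactors contributes the bounded factor $\cosh(kN)/\sinh(kN)$ together with the $t$-dependent factor $M(t)$.

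Finally I would establish the stated behavior of $N(d,t)$. Repeating the lower bound on $\Re\gamma(\cdot\pm id)^\beta$ along the shifted lines — this is exactly where the shifted angles $\pi/4\pm d$ in the definition of $\kappa$ arise — reduces $N(d,t)$ to an integral of the form $\int_0^\infty e^{-\sigma e^{\beta y}}\,dy$ with $\sigma\sim\kappa t$; the substitution $s=e^{\beta y}$ turns this into $\beta^{-1}\int_1^\infty s^{-1}e^{-\sigma s}\,ds$, which is $O(1)$ for $t$ bounded away from $0$ and grows like $|\ln(\kappa t)|$ as $t\to0$, matching $M(t)=1+\cL(\kappa t)$ since $\cL(x)\sim|\ln x|$ as $x\to0$ and $\cL(x)\to1$ as $x\to\infty$. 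I expect the main obstacle to be the uniform-in-$\lambda$ control of the resolvent factor $(\gamma(z)-\lambda)^{-1}$ \emph{simultaneously} with the extraction of the sharp double--exponential rate: the pole $\gamma(z)=\lambda$ can sit arbitrarily far out along the hyperbola as $\lambda\to\infty$, so the distance from the contour to $[\lambda_1,\infty)$ must be estimated uniformly, and the $t\to0$ bookkeeping that yields the logarithmic factor $\cL$ (rather than a spurious negative power of $t$) must be carried out carefully on both the strip boundaries and the real axis.
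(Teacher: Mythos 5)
Your proposal is correct and follows essentially the same route as the paper's proof: reduction via \eqref{errorl} to the scalar error $\cE(\lambda,t)$, the split into the bi-infinite sinc discretization error (handled by the standard strip estimate, with analyticity secured because $b<\lambda_1/\sqrt 2$ keeps $\gamma(\overline S_d)$ away from $[\lambda_1,\infty)$) plus the tail truncation error, the uniform double-exponential bound $|g_\lambda(z,t)|\le Ce^{-t\kappa\cosh(\Re z)^\beta}$, and the exponential-integral/logarithm bookkeeping that produces $M(t)$ and the bound $N(d,t)\le CM(t)$. The only cosmetic difference is that you rule out poles by the exact computation $\Im\gamma(z)=0\Rightarrow\Re z=0$ rather than the paper's two-regime quantitative lower bound on $|\gamma(z)-\lambda|$, but you correctly note that the quantitative bound is needed anyway for the resolvent factor, so the substance is the same.
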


We refer to Examples~\ref{Example_exp} and \ref{Example_exp_balance} for a discussion on the relation between $N$ and $k$
and, in particular, how to get uniform bounds on $\cosh(kN)/\sinh(kN)$. 
Theorem~\ref{quad_theorem} together with the space discretization error estimate provided by Theorem~\ref{thm:semi_init} implies the following result about the total error.
\begin{corollary}[Total Error for the initial value problem]\label{c:total_error}
 Assume that (a) and (b) hold for $\alpha\in (0,1]$ and $\delta $ is
 nonnegative with $ \delta \not = \alpha$. Let $D(t)$ be the constant in \eqref{thm_const} and
 $N(d,t)$, $M(t)$, and $\kappa$ be as in \eqref{kappaetc}. Then, there exists a constant $C$ independent of $t$, $h$, $k$ and $N$
 such that
 \beq\label{total_esti}\bal
    \|(e^{-tL^\beta}-Q_h^{N,k}(t)\pi_h) v\|& \le
    D(t)h^{2\alpha}\|v\|_{\dot{H}^{2\delta}}\\
&\quad + C\Big(\frac{N(d,t)}
{e^{2\pi d/k}-1}+\frac{M(t)\cosh(kN)} {\sinh(kN)} 
e^{-\kappa 2^{-\beta} t e^{kN\beta}}\Big)\|v\|.
\eal
\eeq
\end{corollary}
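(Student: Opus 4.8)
The plan is to obtain the bound by inserting the semidiscrete solution $e^{-tL_h^\beta}\pi_h v$ as an intermediate quantity and splitting the total error with the triangle inequality:
\[
\|(e^{-tL^\beta}-Q_h^{N,k}(t)\pi_h)v\|
\le \|(e^{-tL^\beta}-e^{-tL_h^\beta}\pi_h)v\|
+ \|(e^{-tL_h^\beta}-Q_h^{N,k}(t))\pi_h v\|.
\]
The first term on the right is precisely the space discretization error controlled by Theorem~\ref{thm:semi_init}: since (a) and (b) hold for the given $\alpha$ and $v\in\dot{H}^{2\delta}$, it is bounded by $D(t)h^{2\alpha}\|v\|_{\dot{H}^{2\delta}}$ with $D(t)$ as in \eqref{thm_const}. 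This accounts for the first line of \eqref{total_esti}.

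For the second term I would factor the fully discrete (quadrature) error through the operator norm,
\[
\|(e^{-tL_h^\beta}-Q_h^{N,k}(t))\pi_h v\|
\le \|e^{-tL_h^\beta}-Q_h^{N,k}(t)\|_{L^2(\Omega)\rightarrow L^2(\Omega)}\,\|\pi_h v\|,
\]
and then use that $\pi_h$, being the $L^2(\Omega)$-orthogonal projection onto $H_h$, is a contraction, so $\|\pi_h v\|\le\|v\|$. Applying Theorem~\ref{quad_theorem} to the operator norm yields the quadrature contribution
\[
C\Big(\frac{N(d,t)}{e^{2\pi d/k}-1}+\frac{M(t)\cosh(kN)}{\sinh(kN)}e^{-\kappa 2^{-\beta} t e^{kN\beta}}\Big)\|v\|,
\]
which is exactly the second line of \eqref{total_esti}. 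Summing the two bounds completes the argument.

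Because both ingredients are already established, the only points requiring care are bookkeeping ones: that the quadrature constant $C$ furnished by Theorem~\ref{quad_theorem} carries no hidden dependence on $t$, $h$, $k$ or $N$, and that the spatial rate is driven by the $\dot{H}^{2\delta}$-regularity of $v$ rather than its $L^2$-norm. I expect the main (and essentially only) subtlety to be verifying that no extraneous $t$- or $h$-dependence enters the constants upon combining the two estimates; this follows directly from the uniformity statements already embedded in Theorems~\ref{thm:semi_init} and~\ref{quad_theorem}, together with the elementary contraction bound $\|\pi_h\|_{L^2(\Omega)\to L^2(\Omega)}\le 1$.
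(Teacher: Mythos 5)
Your argument is correct and coincides with the paper's (implicit) proof: the corollary is obtained exactly by inserting $e^{-tL_h^\beta}\pi_h v$, bounding the first term by Theorem~\ref{thm:semi_init} and the second by the operator-norm estimate of Theorem~\ref{quad_theorem} together with $\|\pi_h v\|\le\|v\|$. Nothing further is needed.
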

Before proving Theorem~\ref{quad_theorem}, we mention a fundamental ingredient taken from \cite{sinc_int}.

\begin{lemma} [Lemma 1 of \cite{sinc_int}]   \label{lem1} 
For $r,s>0$, 
\beq
\int_0^\infty e^{-s \cosh(x)}\, dx \le \cL(s)
\label{Ibound1}
\eeq
and
\beq 
\int_r^\infty e^{-s \cosh(x)}\, dx \le (1+\cL(s)) e^{-s
    \cosh r},
\label{Ibound2}
\eeq
with $\cL(s)$ given in \eqref{kappaetc}
\end{lemma}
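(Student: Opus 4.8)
The plan is to prove both bounds by an elementary integer level-set decomposition of the domain, combined with the series expansion $-\ln(1-e^{-s})=\sum_{m=1}^\infty e^{-ms}/m$, which is exactly the logarithm hidden inside $\cL(s)$. Set $x_n:=\cosh^{-1}(n)$ for integers $n\ge 1$, so that $x_1=0$ and the intervals $[x_n,x_{n+1}]$, on which $n\le \cosh x\le n+1$, partition $[0,\infty)$. The single geometric fact driving everything is that these gaps shrink like $1/n$: I claim $x_{n+1}-x_n\le 1/n$ for $n\ge 2$, while the first gap is the explicit constant $x_2-x_1=\ln(2+\sqrt3)\le 2$.

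For \eqref{Ibound1}, since $\cosh x\ge n$ on $[x_n,x_{n+1}]$ I would bound
\[
\int_0^\infty e^{-s\cosh x}\,dx\le\sum_{n=1}^\infty e^{-sn}(x_{n+1}-x_n).
\]
Splitting off the $n=1$ term (controlled by $x_2-x_1\le 2$) and dominating the remaining terms using $x_{n+1}-x_n\le 1/n$, the tail is at most $\sum_{m\ge 2}e^{-sm}/m$. Recognizing $\sum_{m\ge 1}e^{-sm}/m=-\ln(1-e^{-s})=|\ln(1-e^{-s})|$ and using $1-e^{-s}\ge 0$ to absorb the leftover constant, the total collapses to $1+|\ln(1-e^{-s})|=\cL(s)$.

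For \eqref{Ibound2}, I would first reduce to a shifted copy of the previous integral. Writing $x=r+y$ and using the addition formula $\cosh(r+y)=\cosh r\cosh y+\sinh r\sinh y\ge \cosh r\cosh y$ (valid since $r,y\ge 0$), I factor out $e^{-s\cosh r}$:
\[
\int_r^\infty e^{-s\cosh x}\,dx\le e^{-s\cosh r}\int_0^\infty e^{-\sigma(\cosh y-1)}\,dy,\qquad \sigma:=s\cosh r\ge s.
\]
The same level-set decomposition, now applied to $\cosh y-1\ge n-1$ on $[x_n,x_{n+1}]$, together with $x_{n+1}-x_n\le 1/n$ and $x_2-x_1\le 2$, bounds the remaining integral by $2+|\ln(1-e^{-\sigma})|=1+\cL(\sigma)$. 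Since $\cL$ is decreasing in its argument (because $|\ln(1-e^{-s})|$ decreases as $s$ grows) and $\sigma\ge s$, we have $\cL(\sigma)\le \cL(s)$, which yields the claimed prefactor $1+\cL(s)$.

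The main obstacle is the spacing inequality $\cosh^{-1}(n+1)-\cosh^{-1}(n)\le 1/n$ for $n\ge 2$; once it is in hand the rest is bookkeeping with a geometric-type series. I would establish it by showing $\cosh(x_n+1/n)\ge n+1=\cosh(x_{n+1})$ and invoking monotonicity of $\cosh$ on $[0,\infty)$. Using $\cosh x_n=n$, $\sinh x_n=\sqrt{n^2-1}$, the addition formula, and the elementary bounds $\cosh(1/n)\ge 1+\tfrac{1}{2n^2}$ and $\sinh(1/n)\ge 1/n$, one gets $\cosh(x_n+1/n)\ge n+\tfrac{1}{2n}+\tfrac{\sqrt{n^2-1}}{n}$, so the claim reduces to $\tfrac12+\sqrt{n^2-1}\ge n$, i.e.\ $\sqrt{n^2-1}\ge n-\tfrac12$, i.e.\ $n\ge \tfrac54$, which holds for all $n\ge 2$.
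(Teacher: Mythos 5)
Your proof is correct, but there is nothing in the paper to compare it against: the paper does not prove this lemma at all, it imports it verbatim as Lemma~1 of \cite{sinc_int} (L\'opez-Fern\'andez--Palencia). So what your argument buys is a self-contained elementary proof. I checked the steps that carry the weight. The spacing inequality $\cosh^{-1}(n+1)-\cosh^{-1}(n)\le 1/n$ for $n\ge 2$ is genuinely needed (the naive mean-value bound only gives $1/\sqrt{n^2-1}$, which is slightly too large), and your verification via $\cosh(x_n+1/n)\ge n+\tfrac1{2n}+\tfrac{\sqrt{n^2-1}}{n}\ge n+1 \iff \sqrt{n^2-1}\ge n-\tfrac12 \iff n\ge \tfrac54$ is sound. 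For \eqref{Ibound1} the bookkeeping closes: the $n=1$ term contributes $e^{-s}(x_2-x_1)\le 2e^{-s}$, the tail contributes $\sum_{m\ge2}e^{-sm}/m=|\ln(1-e^{-s})|-e^{-s}$, and the sum is $e^{-s}+|\ln(1-e^{-s})|\le \cL(s)$. For \eqref{Ibound2} the reduction $\cosh(r+y)\ge\cosh r\cosh y$ correctly factors out $e^{-s\cosh r}$ with the shifted parameter $\sigma=s\cosh r$, the level sets of $\cosh y-1$ give $2+|\ln(1-e^{-\sigma})|=1+\cL(\sigma)$ (using $1/n\le 1/(n-1)$ to reindex the series), and the monotonicity $\cL(\sigma)\le\cL(s)$ for $\sigma\ge s$ is immediate from $\tfrac{d}{ds}\bigl[-\ln(1-e^{-s})\bigr]=-e^{-s}/(1-e^{-s})<0$. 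No gaps.
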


\begin{proof} [Proof of Theorem~\ref{quad_theorem}] Fix $t>0$.
In view of \eqref{errorl} and since $\lambda_{1,h}>\lambda_1$, it suffices to show that $|\cE(\lambda,t)|$
is bounded by the right hand side of \eqref{Therror} for $\lambda \ge
\lambda_1$.   To prove this, we follow \cite{lund1992sinc}.
We have 
\beq
|\cE({\lambda},t)|\le  \bigg| \int_{-\infty}^{\infty} g_{\lambda}(y,t)\, dy -
 k\sum_{j=-\infty}^{\infty}g_{\lambda}(j k,t)\bigg |+k \sum_{|j|>N}| g_{\lambda}(j k,t)|.
\label{splitsum}
\eeq

To bound the first term on the right hand side of \eqref{splitsum}, 
we apply standard estimates for sinc quadratures \cite{lund1992sinc}
\beq\bal
\bigg|\int_{-\infty}^\infty g_\lambda(y,t)\, dy -\sum_{j=-\infty}^{\infty}
g_\lambda (kj)\bigg|&\le \frac {N(d,t)}{2\sinh(\pi d/k)}
e^{-\pi d/k}\\ & \quad =
\frac {N(d,t)}
{e^{2\pi d/k}-1}
,\eal
\label{sinc1}
\eeq
which are valid provided: 
\begin{enumerate} [(i)]
\item For each $\lambda\ge \lambda_1$ and $t>0$, 
$g_\lambda(z,t)$ is an analytic function  of $z$ on the strip 
$$
S_d:=\{z\ :\ \Im(z)<d\};
$$
\item $\displaystyle
  \int_{-d}^d |g_\lambda(y+i\eta,t)|\ d\eta\le C,\Forall y\in \RR$;
\item $N(d,t)<\infty$ for $t>0$.
\end{enumerate}

We now show that the conditions are satisfied. Note that for $z\in \CC$,
\beq\bal
\Re(\gamma(z))&=\sqrt 2 b \cosh(\Re(z)) \sin\bigg(\frac
\pi4-\Im(z)\bigg)\quad\hbox{and}
\\
\Im(\gamma(z))&=\sqrt 2 b \sinh(\Re(z)) \sin\bigg(\frac
\pi4+\Im(z) \bigg).
\eal
\label{gamids}
\eeq
It follows that $\Re(\gamma(z))>0$ for $z\in \bar S_d=\{z\ :\ \Im(z)\le d\}$.
Thus, to prove (i), it suffices to show that $\lambda$ is not contained in the image of $S_d$ under $\gamma$.  
In fact, we shall show that 
there is a constant $C>0$ such that 
\beq
|(\gamma(z)-\lambda)|\ge C  \Forall z\in S_d\hbox{ and }\lambda\ge
\lambda_1 .
\label{glambdai}
\eeq

To see this, let $y_0 >0$ be any number  such that
$C_0:=\lambda_1-\sqrt{2}b\cosh{(y_0)}>0$.
Then for $z\in \overline {S}_d$ with $|\Re(z)|\le y_0$, 
\beq
\bal
|\gamma(z)-\lambda|&\geq|\Re(\gamma(z)-\lambda)|
=|\sqrt{2}b\cosh{(\Re(z))}\sin\bigg(\frac{\pi}{4}-\Im(z)\bigg) -\lambda|\\
&\geq \lambda_1-\sqrt2b\cosh(y_0)=C_0.
\eal
\label{gb1}
\eeq
On the other hand, if  $z\in \overline {S}_d$ with $|\Re(z)|> y_0$,
\beq\bal
|\gamma(z)-\lambda|\geq|\Im(\gamma(z)-\lambda)|
&=\sqrt{2}b|\sinh{(\Re(z))}|\sin\bigg(\Im(z)+\frac{\pi}{4}\bigg)\\
&\ge \sqrt2 b \sinh(y_0) \sin\bigg(\frac\pi 4 - d\bigg).
\eal
\label{gb2}
\eeq
Combining the above two inequalities shows 
\eqref{glambdai} and hence (i).   

To verify (ii) and (iii), we provide bounds on
$|g_\lambda(z,t)|$ for $z\in {\overline S}_d$ and $\lambda \ge \lambda_1$.
Similar computations leading to \eqref{gamids} implies that
\beq
|\gamma^\prime(z)| \le |\Re{(\gamma'(z))}|+|\Im{(\gamma'(z))}|
 \le 2\sqrt 2 b \cosh(\Re(z)).
\label{gpbound}
\eeq
For $z\in \overline {S}_d$ with  $|\Re(z)|\le y_0$, 
\eqref{gb1} yields
\beq
|g_\lambda(z,t)|\le 
\frac {2\sqrt2b \cosh(y_0)}{C_0}
\big|e^{-t\gamma(z)^\beta}\big|.
\label{glb1}
\eeq
Similarly, for $z\in {\overline S}_d$ with $|\Re(z)|> y_0$, there holds
\beq
|g_\lambda(z,t)|\le 
 \frac {2\cosh(\Re(z))} 
{|\sinh(\Re(z))|\,\sin(\pi/4-d)}
|e^{-t\gamma(z)^\beta}|
\le C|e^{-t\gamma(z)^\beta}|,
\label{glb2}
\eeq
where to derive the last inequality, we used
$$\frac {\cosh(x)} 
{|\sinh{(x)}|}\le \bigg|1+\frac{2}{e^{2y_0}-1}\bigg|, \quad \hbox{ for }
x\in \RR \hbox{ with } |x|\ge y_0.$$

We next bound the exponential on the right hand side of \eqref{glb1} and 
\eqref{glb2}.  To this end, we note that by \eqref{gamids},
$$\bigg|\frac {\Im(\gamma(z))} {\Re(\gamma(z))}\bigg | =
\frac {|\sinh(\Re(z))| \sin(
\pi/4+\Im(z) )}
{\cosh(\Re(z)) \sin(
\pi/ 4-\Im(z))}\le \tan(\frac{\pi}{4}+d),\Forall z\in {\overline S}_d.$$
Thus, 
$$|\arg(\gamma(z))| \le \frac{\pi}{4}+d,\Forall z\in {\overline S}_d,$$
so that together with the observation  $|\gamma(z)|\ge |\Re(\gamma(z))|$ and
\eqref{gamids}, we arrive at
$$\bal
\Re(\gamma(z)^\beta)&= |\gamma(z)|^\beta \cos(\beta \arg(\gamma(z))
\\&\ge \cos(\beta (\pi/4+d)) |\gamma(z)|^\beta
\\&\ge\cos(\beta  (\pi/4+d)) |\Re(\gamma(z))|^\beta\\
&\ge \kappa \cosh(\Re(z))^\beta,\Forall z\in {\overline S}_d.
\eal
$$

Combining the above inequality with \eqref{glb1} and \eqref{glb2} 
shows that 
\beq
|g_\lambda(z,t)|\le C e^{-t \Re(\gamma(z)^\beta)}\le 
Ce^{-t\kappa \cosh(\Re(z))^\beta}, \Forall z\in {\overline S}_d\hbox{ and }
\lambda\ge \lambda_1.
\label{finalglb}
\eeq
This immediately implies (ii), i.e.
$$  \int_{-d}^d |g_\lambda(y+i\eta,t)|\ dy \le
2d Ce^{-t\kappa (\cosh \ y)^\beta} \le C.$$
To show (iii), we use again \eqref{finalglb} to deduce
\beq
\bal
N(d,t)&=\max_{\lambda\geq \lambda_1}{\int_{-\infty}^{\infty}(|g_{\lambda} (y-id,t)|+|g_{\lambda}(y+id,t)|)\, dy}\\
&\leq C\int_0^{\infty} e^{-\kappa t (\cosh{y})^\beta}\, dy\\ &\le 
C\int_0^{1} e^{-\kappa t (\cosh{y})^\beta}\, dy\ +C\int_{1}^{\infty} e^{-\kappa t (\cosh{y})^\beta}\, dy.
\eal
\label{N-bound}
\eeq
The first integral is bounded by $1$.  For the second,
making the change of integration variable, $(\cosh y)^\beta=\cosh u$,
gives
\beq
I_2:=\int_1^{\infty} e^{-\kappa t (\cosh{y})^\beta}\, dy
=\frac{1}{\beta}\int_{u_0}^\infty e^{-\kappa
  t \cosh u } \frac{\sinh u 
  \cosh y}{\cosh{u}\sinh y} \,du
\label{1bound}
\eeq 
where $u_0=\cosh^{-1}[(\cosh{(1)})^\beta]$.
As $\cosh(y)/\sinh(y)$ is decreasing for positive $ y$ 
and $\sinh (u)/\cosh(u) <1$ for positive $u$, 
applying Lemma~\ref{lem1} gives
\beq
I_2 \le \frac {\cosh(1)} {\beta  \sinh(1)}
\int_{u_0}^\infty e^{-\kappa
  t \cosh u }\, du 
\le \frac {\cosh(1)} {\beta  \sinh(1)}
(1+\cL(\kappa t)) e^{-\kappa t 
\cosh(1)^\beta}.
\label{i2bound}
\eeq
Combining this with the bound for the first integral of the right hand side of \eqref{N-bound} 
 proves (iii) and concludes the estimation for the first term in \eqref{splitsum}.

For the second term of \eqref{splitsum}, we again apply \eqref{finalglb}
and obtain
\beq\bal J:&= \bigg |k \sum_{|j|>N} g_{\lambda}(j k,t)\bigg|
\le Ck \sum_{|j|>N} e^{-t\kappa \cosh(jk)^\beta}
\le C \int_{kN}^\infty e^{-t\kappa \cosh( y)^\beta}\, dy.
\eal
\label{stsplit}
\eeq
Repeating the arguments in \eqref{1bound} and \eqref{i2bound} 
(with $u_0:=\cosh^{-1}(\cosh(kN)^\beta)$) gives
$$\bal
J 
&\le \frac {C \cosh(kN)} { \sinh(kN)}
(1+\cL(\kappa t)) e^{-\kappa t \cosh(kN)^\beta}\\
&\le \frac {C \cosh(kN)} { \sinh(kN)}
(1+\cL(\kappa t)) e^{-\kappa 2^{-\beta} t e^{kN\beta}}
.\eal
$$
This completes the estimate for the second term of \eqref{splitsum} and proof.
\end{proof}

We conclude this section with two examples illustrating different choices of $k$
and $N$.

\begin{example}[Large t] \label{Example_exp}
 As in \cite{sinc_int}, we set $k:=\ln N/(\beta N)$ for some $N>1$.  
The mononicity of
  $\cosh x/\sinh x $ for positive $x$ implies that
 \beq
\frac {\cosh(kN)} { \sinh(kN)}\le \frac {\cosh(\ln 2/\beta)} 
{\sinh(\ln 2/\beta)}
\label{u1est}
\eeq
and hence
\beq
\|e^{-tL_h^\beta}-Q_h^{N,k}(t)\|_{L^2(\Omega)\rightarrow L^2(\Omega)}
 \leq C\Big(\frac{N(d,t)}
{e^{{ 4\beta\pi d N}/{\ln{N}}}-1}+\frac{M(t)} {e^{\kappa t2^{-\beta}N}}\Big)=O(e^{-CN/\ln N}).
\label{first_ex}
\eeq
\end{example}

\begin{example}[Small t] \label{Example_exp_balance}
When $t$ is small, we attempt to balance the error coming
  from the two terms in \eqref{Therror} by setting
$$\frac {2\pi d }{k}\approx   \kappa t 2^{-\beta} e^{\beta N k}.$$
To this end, given an integer 
$N>0$, we define $k$ to be the unique positive solution
of 
$$ke^{\beta Nk}= \frac {2^{1+\beta}\pi d} {\kappa t}.$$
Hence, for $t\le 1$ and $N>1$,
$$Nk\ e^{\beta Nk} = \frac {2^{2+\beta}N \pi d} {\kappa t}
\ge \frac {2^{1+\beta}\pi d} {\kappa }.$$
In particular $Nk\ge \zeta$ where $\zeta$ is the positive solution of 
$$\zeta e^{\beta \zeta} =\frac  {2^{1+\beta}\pi d} {\kappa }$$
so that the monotonicity of $\cosh(\cdot)/\sinh(\cdot)$ implies
$$\frac {\cosh(kN)} { \sinh(kN)}\le \frac {\cosh(\zeta)} 
{\sinh(\zeta)}.$$
As a consequence, we obtain the quadrature error estimate
$$
\|e^{-tL_h^\beta}-Q_h^{N,k}(t)\|_{L^2(\Omega)\rightarrow L^2(\Omega)}
 \leq C({N(d,t)} +{M(t)}) e^{-2\pi d/k}
\text{.}
$$
\end{example}

\section{Numerical Illustrations}\label{numerical}
In this section, we present some numerical experiments illustrating the error estimates provided in Sections~\ref{FEapprox} and 
\ref{quad}. 
\subsection{The Effect of the Space Discretization.\\}
\paragraph{A One Dimensional Initial Value Problem.}
Consider the one-dimensional domain $\Omega:=(0,1)$,
$Lu:=-u^{\prime\prime}$, $f\equiv 0$ and the initial condition
\beq
v(x):=\left \{\bal 2x, & \qquad x<0.5,\\
     2-2x,& \qquad x\geq 0.5.
         \eal \right.
\label{v(x)}
\eeq
Note that $v$  of \eqref{v(x)} 
belongs to $\dot{H}^{\frac32-2\epsilon}(0,1)$ for any $\epsilon>0$.
Theorem~\ref{thm:semi_init} with $\alpha=1$ guarantees 
$$\| u(t) - u_h(t) \| \leq C t^{-\frac{1/4+\epsilon}{\beta}} h^2.$$

To illustrate the error behavior predicted by  \eqref{e:num_rate}, we
use a mesh of equally spaced points, i.e., $h=1/(M+1)$ with $M$ being
the number of interior nodes.  We set $H_h$ to be the set of continuous
piecewise linear functions with respect to this mesh vanishing at 0 and
1.   The resulting stiffness and mass matrices, denoted by  $\mathcal A_h$ and 
$\mathcal M_h$,  
 are defined in terms of 
the standard hat-function finite element basis $\{\phi_i\}$, $i=1,\ldots
M$ and are tri-diagonal matrices
with tri-diagonal entries $h^{-1}(-1,2,-1)$ and $h(1/6,4/6,1/6)$,
respectively. The operator $L_h$ is given by $L_h=\mathcal M_h^{-1}
\mathcal A_h$.

\def\mcD{{\mathcal D}}
These matrices can be diagonalized using the discrete sine
transform, i.e., the $M\times M$ matrix with entries
$S_{jk}:= \sqrt{2 h}\sin( jk\pi h)$. In this case, $u_h(t)$ can be
computed exactly without the use of the sinc quadrature.
In fact, the matrix
representation of $L_h$ is given by 
$\widetilde L_h = S^{-1} \Lambda S$ with $\Lambda$ denoting the diagonal matrix
with diagonal $\Lambda_{jj} = 6h^{-2}(1-\cos(j\pi h))/(2+\cos(j\pi h))$,
$j=1\ldots, M$.   The matrix $\widetilde L_h $ takes coefficients of a
function $w\in H_h$ to those of $L_hw$.
Let $\overline V$ be the vector in $\RR^M$ defined by
$$\overline V_j= (v,\phi_j), \quad j=1,\ldots,M.$$
Then, the matrix representing $e^{-tL_h^\beta} w$ is thus given by 
$S^{-1} e^{-t\Lambda^\beta} S $ so the vector of coefficients representing
$e^{-tL_h^\beta}\pi_h v$ is  given by
$$ S^{-1} \mcD(t) S \overline V$$
where $\mcD(t)$ is the diagonal matrix with diagonal entries
$$\mcD(t)_{ii}= \frac {3e^{-t \Lambda_{ii}^\beta}}{h(2+\cos(i\pi
  h))},\quad i=1,\ldots,M.$$ 
The action of $S $ on a vector can be efficiently 
computed using the Fast Fourier Transform in $O(M\ln \ M)$ operations 
and $S^{-1}=S$.

To compute the solution $u(t)$ at the finite element nodes, 
the exact solution $u$ is approximated using the first 50000 modes of its Fourier representation.
The number of modes is chosen large enough such that it does not influence the space discretization error. 
Let $I_h$ denote the finite element interpolant operator associated with
$H_h$. 
As 
$$
\| u(t) - I_h u(t) \| \leq C t^{-\frac{1/4+\epsilon}{\beta}} h^2,$$
we report 
$\| I_hu(t) - u_h(t) \|$ in Tables 5.1--5.4. 

Table~\ref{table:init_semi_2} reports the $L^2$ error $e_{h_i}:=\|I_{h_i}u(t)-u_{h_i}(t)\|$ and observed rate of convergence
$$
OROC_i:= \ln(e_i/e_{i+1}) / \ln (h_i/h_{i+1})
$$
for different $\beta$ at time $t=0.5$.
In all cases, we observed $\|u(t)-u_h(t)\| \sim h^2$ as predicted by Theorem~\ref{thm:semi_init}, see also \eqref{e:num_rate}.
\begin{table}[hbt!]
 \begin{center}
    \begin{tabular}{|c|c|c|c|c|c|c|}
     \hline
     $h$ & \multicolumn{2}{|c|}{$\beta=0.25$}& \multicolumn{2}{|c|}{$\beta=0.5$} & \multicolumn{2}{|c|}{$\beta=0.75$} \\
     \hline
     $1/8$ & $6.29\times 10^{-4}$ & & $4.96\times 10^{-4}$ & &$1.14\times 10^{-4}$ &    \\
     \hline
     $1/16$  & $1.59\times 10^{-4}$ & $1.98$ &$1.25\times 10^{-4}$ & $1.99$ & $2.92\times 10^{-5}$ & $1.97$  \\
     \hline
     $1/32$ & $3.98\times 10^{-5}$ & $2.00$ &$3.12\times 10^{-5}$ & $2.00$& $7.33\times 10^{-6}$ & $1.99$  \\
     \hline
     $1/64$ & $9.95\times 10^{-6}$ & $2.00$ &$7.81\times 10^{-6}$ & $2.00$&$1.83\times 10^{-6}$ &  $2.00$  \\
     \hline
     $1/128$ & $2.49\times 10^{-6}$ & $2.00$ &$1.95\times 10^{-6}$ & $2.00$& $4.59\times 10^{-7}$ &$2.00$ \\
     \hline
    \end{tabular}
 \end{center}
  \caption{$L^2$ errors and observed rate of convergence (OROC) for different values of $\beta$.  
  The observed error decay is in accordance with Theorem~\ref{thm:semi_init}.}
  \label{table:init_semi_2}
\end{table}

\paragraph{A Parabolic Equation with Non-Homogeneous $f$ and Zero Initial Data.}
We now consider the one dimensional problem above but with $v=0$ and 
$$
f(x,t):=f(x):=\left \{\bal 2x, & \qquad x<0.5,\\
     2-2x,& \qquad x\geq 0.5.
         \eal \right.
$$
This choice implies that $f \in L^{\infty}(0,T;\dot{H}^{\frac{3}{2}-\epsilon})$ for every  $\epsilon>0$ so that
Corollary~\ref{cor:nonho_semi} predicts a rate of convergence $\min(2\beta+3/2,2)$.
 
In Table~\ref{tab:aroc_2}, we report
the asymptotic observed convergence
rate, computed as in Table~\ref{table:init_semi_2} for $t=0.5$. This rate is
defined  
by  OROC$:= \ln(e_{h_9}/e_{h_10}) / \ln2$ for $\beta>1/4$
and OROC$:= \ln(e_{h_{12}}/e_{h_{13}}) / \ln2)$ for $\beta<1/4$ where
$h_i=1/2^i$. The
finer mesh sizes were used in the case of $\beta<1/4$ to get closer to
the asymptotic convergence order.

\begin{table}[hbt!]
  \begin{center}
    \begin{tabular}{c|c|c|c|c|c|c|c|c|c}
   & \multicolumn{2}{|c|}{$\beta<0.25$}& \multicolumn{7}{c}{$\beta>0.25$} \\    
\hline
   $\beta $& 0.1 & 0.2 & 0.3 & 0.4 & 0.5& 0.6 & 0.7 & 0.8 & 0.9 \\   
   \hline
   OROC & 1.73 & 1.88 & 1.95 & 1.99 & 2.00 & 2.00 & 2.00 & 2.00 & 1.00\\
   THM & 1.7 & 1.9 & 2.0 & 2.0 & 2.0 & 2.0 & 2.0 & 2.0 & 2.0
 \end{tabular}
 \end{center}
  \caption{Observed rate of convergence (OROC) for different values of $\beta$ together the rates predicted by 
Corollary~\ref{cor:nonho_semi} (THM).}
  \label{tab:aroc_2}
\end{table}

\paragraph{A two dimensional homogenous initial value problem.}
Consider the unit square $\Omega:=(0,1)^2$, $L:=-\Delta$  
and the checkerboard initial data
\begin{equation}\label{eq:checkerboard}
  v(x_1,x_2)=\left \{\bal 1 & \qquad \text{if }(x_1-0.5)(x_2-0.5)>0, \\
     0 & \qquad \text{otherwise.}
         \eal \right. 
\end{equation}
Since we have $v\in \dot{H}^{1/2-2\epsilon}(\Omega)$ for all $\epsilon>0$, Theorem~\ref{thm:semi_init} guarantees an error decay
$$
\| u(t) - u_h(t) \| \leq C t^{-(\frac 3 4+\epsilon)/\beta} h^2.
$$ 
The subdivisions $\mathcal T_h$ are successive uniform refinement of $\Omega$ into squares.
Hence, taking advantage of the tensor product structure of the problem,
we compute $u_h(t)$ but using the two dimensional sine transform 
and  approximate $u(t)$ using, in this case, 
500 Fourier modes in each direction.
Table~\ref{table:2Derror} reports the values of $\|u(t)-u_h(t)\|$
together with the OROC for $t=0.5$ for several values of  $h$.
The numerical results reflect the  the error bound provided in Theorem~\ref{thm:semi_init}.

\begin{table}[hbt!]
 \begin{center}
    \begin{tabular}{|c|c|c|c|c|c|c|}
     \hline
     $h$ & \multicolumn{2}{|c|}{$\beta=0.25$}& \multicolumn{2}{|c|}{$\beta=0.5$} & \multicolumn{2}{|c|}{$\beta=0.75$} \\
     \hline
     $1/4$ & $2.21\times 10^{-2}$ & & $8.85\times 10^{-3}$ & &$2.15\times 10^{-3}$ &    \\
     \hline
     $1/8$  & $5.74\times 10^{-3}$ & $1.95$ &$2.74\times 10^{-3}$ & $1.69$ & $6.90\times 10^{-4}$ & $1.64$  \\
     \hline
     $1/16$ & $1.42\times 10^{-3}$ & $2.02$ &$7.39\times 10^{-4}$ & $1.89$& $1.84\times 10^{-4}$ & $1.91$  \\
     \hline
     $1/32$ & $4.32\times 10^{-4}$ & $1.72$ &$1.89\times 10^{-4}$ & $1.96$&$4.66\times 10^{-5}$ &  $1.98$  \\
     \hline
     $1/64$ & $1.36\times 10^{-4}$ & $1.66$ &$4.75\times 10^{-5}$ & $1.99$& $1.17\times 10^{-5}$ &$1.99$ \\
     \hline
     $1/128$ & $3.56\times 10^{-5}$ & $1.93$ &$1.19\times 10^{-5}$ & $2.00$& $2.93\times 10^{-5}$ &$2.00$ \\
     \hline
    \end{tabular}
 \end{center}
  \caption{$L^2$ error at $t=0.5$ and observed rate of convergence for different values of $\beta$ in the two dimensional case with checkerboard initial condition. As predicted by Theorem~\ref{thm:semi_init}, the $L^2$ error decays like $h^2$. }
  \label{table:2Derror}
\end{table}

\subsection{Sinc Quadrature.}
We now focus on the quadrature error estimate given in
Theorem~\ref{quad_theorem} and study the two different relations between
$k$ and $N$ discussed in Examples~\ref{Example_exp} and
\ref{Example_exp_balance}.  To do this we introduce an approximation to 
$\|\cE(\cdot,t)\|_{L^\infty(10,\infty)}$ defined by the following
procedure.
\begin{enumerate}[(i)]
\item  We examine the value of $|\cE(\lambda,t)|$ for
  $\lambda_j=10\mu^j$ for $j=0,1,\cdots,\mathcal N$. Here $\mu>1$ and $\mathcal N$ is
  chosen sufficiently large so that $|\cE(\lambda,t)|$ is monotonically
  decreasing when $\lambda \geq \lambda_{\mathcal N}$ (for $t=.5$, we take 
$\mu = \frac 32$ and $\mathcal N=40$).
\item We set $k:=\mathop{\mathrm{argmax}}_{j=1,...,\mathcal N}(|\cE(\lambda_j,t)|)$ and approximate 
$$
\|\cE(\cdot,t)\|_{L^\infty(10,\infty)} \approx \max_{l=1,...,\mathcal N} |\cE(\rho_l,0.5)|, \qquad \textrm{where }\rho_l := \lambda_{k-1}+ \frac{\lambda_{k+1}-\lambda_{k-1}}{\mathcal N}l.
$$
By adjusting $\mu$, $\mathcal N$ and $l$, we can obtain
$\|\cE(\cdot,t)\|_{L^\infty(10,\infty)}$ to any desired accuracy.
\end{enumerate}
In Figures~\ref{fig:ce1} and \ref{fig:ce2}, we report 
values of $\|\cE(\cdot,t)\|_{L^\infty(10,\infty)}$ as a function of $N$
obtained by runing the above algorithm with $\mathcal N$ and $\mu$ adjusted so
that the results are accurate to the number of digits reported.  
When considering Example~\ref{Example_exp_balance}, we choose $d=\pi/8$.

For Figure~\ref{fig:ce1}, we take $t=0.5$.   The blue lines give the
results for Example~\ref{Example_exp} while the red lines give the 
results for Example~\ref{Example_exp_balance}.
Except for the case of $\beta=.25$, the
Example~\ref{Example_exp_balance} are somewhat better.

For Figure~\ref{fig:ce2}, we take $N=32$ and report the errors as a
function of $t$.  In all cases, Example~\ref{Example_exp_balance} shows
significant improvement over Example~\ref{Example_exp} for small 
$t$.    
For Example~\ref{Example_exp_balance}, we used $d=\pi/8$ so that
$k$ could be computed as a function of $ N$.

\begin{figure}[hbt!]
 \begin{center}
\includegraphics[scale=0.7]{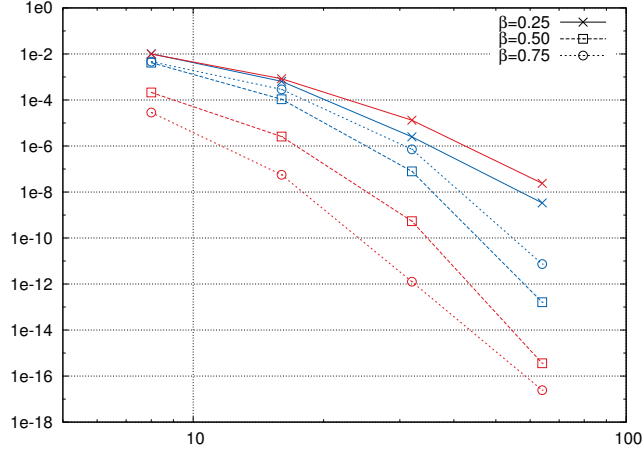}
 \end{center}
    \caption{$\|\cE(\cdot,1/2)\|_{L^\infty(10,\infty)}$ as a function of
      $N$ with Example~\ref{Example_exp} and
      Example~\ref{Example_exp_balance} reported in blue and red,
      respectively.}
    \label{fig:ce1}
    \end{figure}
 
\begin{figure}[hbt!]
 \begin{center}
\includegraphics[scale=0.7]{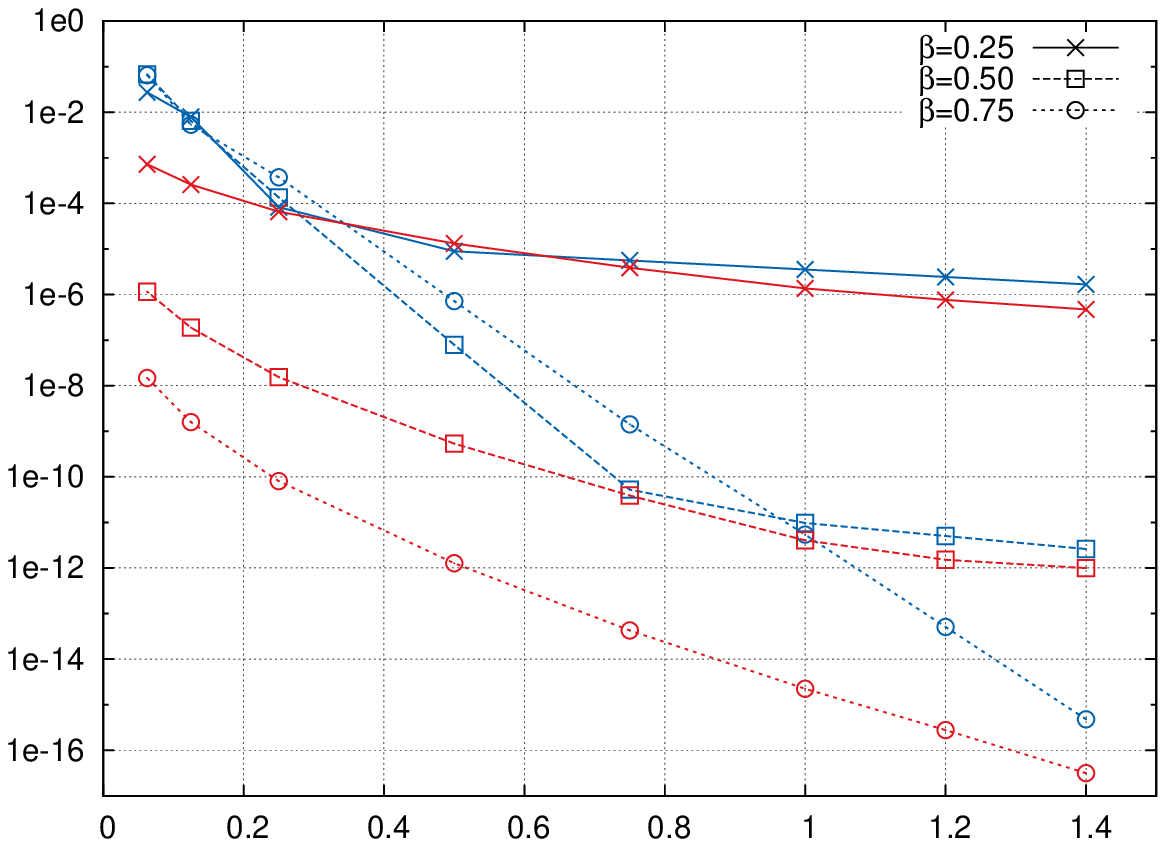}
 \end{center}
    \caption{$\|\cE(\cdot,t)\|_{L^\infty(10,\infty)}$ with $N=32$ 
as a function of $t$ with Example~\ref{Example_exp} and
      Example~\ref{Example_exp_balance} reported in blue and red,
      respectively.}
    \label{fig:ce2}
    \end{figure}

We consider again the two dimensional initial value problem  discussed above
but use the sinc quadrature approximation 
\eqref{eq:initq} with $N=40$ and $k=\ln(N)/(\beta N)$ (Example~\ref{Example_exp}). Here we use \emph{triangle} \cite{ISI:000175172500003}
to generate meshes such that each mesh is quasi-uniform and controlled by maximum area of cells. Approximation $Q_h^{N,k}(0.5)$ for different values of $\beta$ are provided in Figure~\ref{fig_png}, thereby illustrating the effect of $\beta$ on the diffusion strength. 
In addition, snapshots of $Q_h^{N,k}(t)$ at different times $t$ are provided in Figure~\ref{fig_evol_png}.

\begin{figure}[hbt!]
 \begin{center}
    \begin{tabular}{ccc}
\includegraphics[scale=.20]{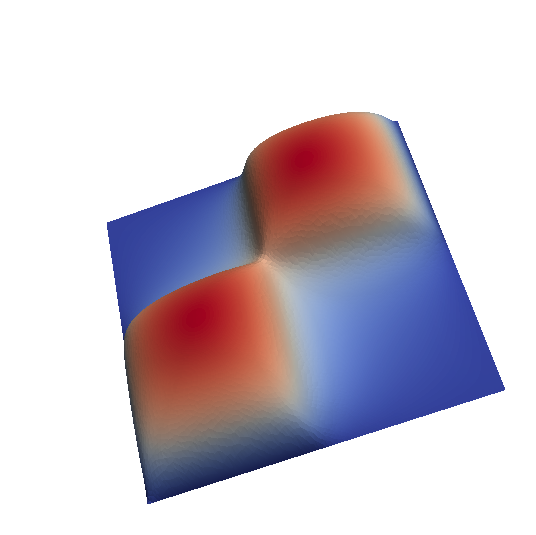} & \includegraphics[scale=.20]{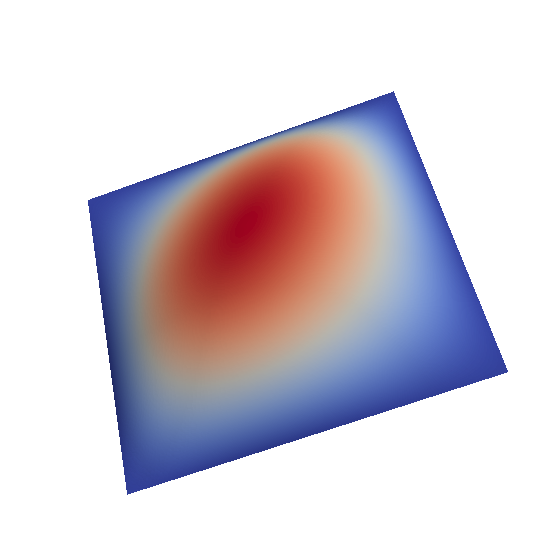}
&\includegraphics[scale=.20]{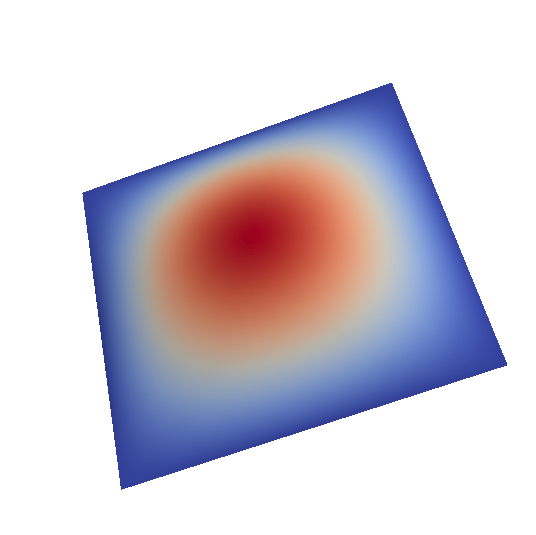}\\
$\beta=0.25$ & $\beta=0.5$ & $\beta=0.75$
    \end{tabular}
 \end{center}
    \caption{Approximations $Q_h^{N,k}(0.5)$ for initial data problem for different values of $\beta$. 
    The diffusion process is faster when increasing $\beta$.}
    \label{fig_png}
    \end{figure}

\begin{figure}[hbt!]
 \begin{center}
    \begin{tabular}{ccc}
    \includegraphics[scale=.20]{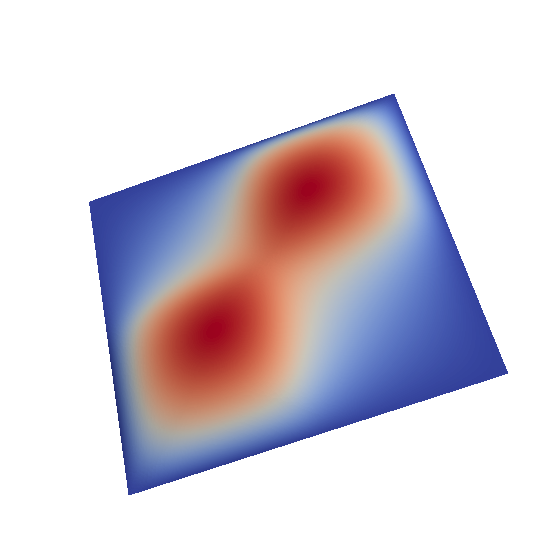} & \includegraphics[scale=.20]{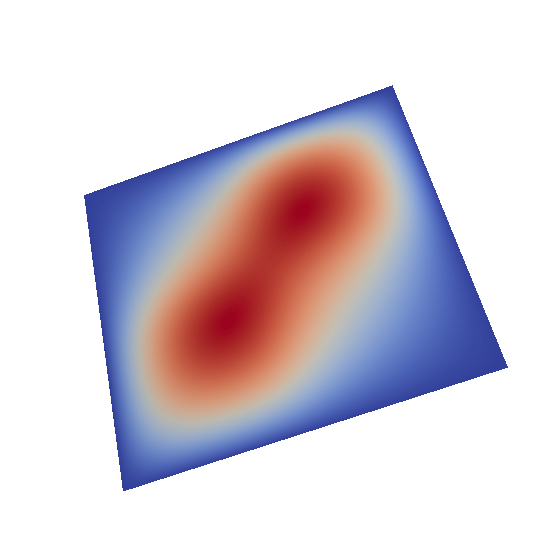}
&\includegraphics[scale=.20]{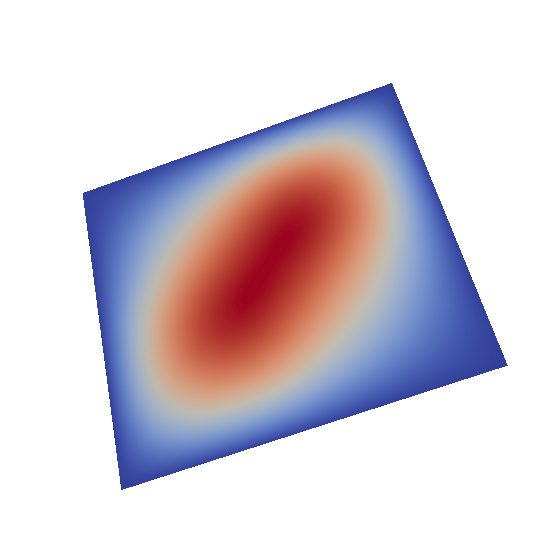}\\
$t=1.0$&$t=1.5$&$t=2.0$\\
    \end{tabular}
 \end{center}
    \caption{Evolution of the solution $Q_h^{N,k}(t)$ when $\beta=0.25$.}
    \label{fig_evol_png}
    \end{figure}

Finally, the total approximation errors, $\|Q_h^{N,k}(t)-u(t)\|$ at $t=0.5$, are reported in Table~\ref{table:total} for different values
of $\beta$.
The optimal order $2$ predicted by Corollary~\ref{c:total_error} is obtained for large $\beta$, while the asymptotic regime for $\beta=0.25$ was not reached in the computations.

\begin{table}[hbt!]
  \begin{center}
    \begin{tabular}{|c|c|c|c|c|c|c|}
     \hline
     $h^2$ & \multicolumn{2}{|c|}{$\beta=0.25$}& \multicolumn{2}{|c|}{$\beta=0.5$} & \multicolumn{2}{|c|}{$\beta=0.75$} \\
     \hline
     $0.02$ & $2.38\times 10^{-2}$ & & $1.47\times 10^{-3}$ & &$6.11\times 10^{-4}$ &    \\
     \hline
     $0.005$  & $6.20\times 10^{-3}$ & $1.94$ &$4.72\times 10^{-4}$ & $1.64$ & $1.66\times 10^{-4}$ & $1.88$  \\
     \hline
     $0.00125$ & $1.59\times 10^{-3}$ & $1.96$ &$1.21\times 10^{-4}$ & $1.96$& $4.32\times 10^{-5}$ & $1.94$  \\
     \hline
     $0.0003125$ & $4.26\times 10^{-4}$ & $1.90$ &$3.17\times 10^{-5}$ & $1.93$&$1.09\times 10^{-5}$ &  $1.99$  \\
     \hline
      $0.000078125$ & $1.13\times 10^{-4}$ & $1.91$ &$7.88\times 10^{-6}$ & $2.01$&$2.73\times 10^{-6}$ &  $2.00$  \\
     \hline
    \end{tabular}
 \end{center}
  \caption{Total approximation error $\|Q_h^{N,k}(t)-u(t)\|$ at $t=0.5$ and convergence rate for initial data \eqref{eq:checkerboard}
   with different values of $\beta$.
   The optimal order $2$ predicted by Corollary~\ref{c:total_error} is obtained for large $\beta$, while the asymptotic regime for $\beta=0.25$ was not reached in the computations.}
 \label{table:total}
\end{table}

\section*{Acknowledgment}
The first and second authors were
supported in part by
  the National Science Foundation through Grant DMS-1254618 while the 
second and third authors were supported in part by
  the National Science Foundation through Grant DMS-1216551.

\bibliographystyle{abbrv}

\end{document}